\setlist[enumerate,1]{label=(\arabic*).,ref=\thetheorem\,(\arabic*),
font=\textup,leftmargin=7mm,labelsep=1.5mm,topsep=0mm,itemsep=-0.8mm}
\setlist[enumerate,2]{label=(\alph*).,font=\textup,
leftmargin=7mm,labelsep=1.5mm,topsep=-0.8mm,itemsep=-0.8mm}
\newtheorem{theorem}{Theorem}[section]
\newtheorem{lemma}{Lemma}[section]
\newtheorem{corollary}{Corollary}[section]
\newdefinition{definition}{Definition}[section]
\newdefinition{remark}{Remark}[section]
\newdefinition{example}{Example}[section]
\newdefinition{claim}{Claim}
\newproof{proof}{Proof}
\renewcommand*\env@matrix[1][\arraystretch]{%
\edef\arraystretch{#1}%
\hskip -\arraycolsep
\let\@ifnextchar\new@ifnextchar
\array{*\c@MaxMatrixCols c}}
\numberwithin{equation}{section}
\newcommand{\cupdot}{\dot{\cup}}
\renewcommand{\cup}{\operatorname*{\,\scalebox{0.8}{$\bigcup$}}\limits\,}
\begin{document}
\begin{frontmatter}
\title{On the irregularity of uniform hypergraphs\,\tnoteref{titlenote}}
\tnotetext[titlenote]{This work was supported by the National Nature
Science Foundation of China (Nos.\,11471210, 11571222)}

\author[address1]{Lele Liu}
\ead{ahhylau@gmail.com}

\author[address1]{Liying Kang}
\ead{lykang@shu.edu.cn}

\author[address1,address2]{Erfang Shan\corref{correspondingauthor}}
\cortext[correspondingauthor]{Corresponding author}
\ead{efshan@i.shu.edu.cn}

\address[address1]{Department of Mathematics, Shanghai University, Shanghai 200444, P.R. China}
\address[address2]{School of Management, Shanghai University, Shanghai 200444, P.R. China}

\begin{abstract}
Let $H$ be an $r$-uniform hypergraph on $n$ vertices and $m$ edges, and let
$d_i$ be the degree of $i\in V(H)$. Denote by $\varepsilon(H)$ the difference
of the spectral radius of $H$ and the average degree of $H$. Also, denote
\[
s(H)=\sum_{i\in V(H)}\left|d_i-\frac{rm}{n}\right|,~
v(H)=\frac{1}{n}\sum_{i\in V(H)}d_i^{\frac{r}{r-1}}-\left(\frac{rm}{n}\right)^{\frac{r}{r-1}}.
\]
In this paper, we investigate the irregularity of $r$-uniform hypergraph $H$ with
respect to $\varepsilon(H)$, $s(H)$ and $v(H)$, which extend relevant results
to uniform hypergraphs.
\end{abstract}

\begin{keyword}
Uniform hypergraph \sep
Adjacency tensor \sep
Measure of irregularity \sep
Degree sequence

\MSC[2010]
15A42 \sep
05C50
\end{keyword}
\end{frontmatter}

\section{Introduction}
Let $G=(V(G),E(G))$ be an undirected graph with $n$ vertices and $m$ edges without
loops and multiple edges, where $V(G)=[n]:=\{1,2,\ldots,n\}$. A graph $G$ is regular
if all its vertices have the same degree, otherwise it is irregular. In many
applications and problems it is of importance to know how irregular a given graph is.
Various measures of graph irregularity have been proposed and studied,  see, for example,
\cite{Bell,Collatz,Henning,Nikiforov:Degree deviation,Nikiforov2007} and references therein.

We first survey some known parameters used as measures of irregularity as well as their
respective properties. In 1957, Collatz and Sinogowitz \cite{Collatz} showed that the
spectral radius $\rho(G)$ of a graph $G$ is greater than or equal to the average degree
$\overline{d}(G)$, and the equality holds if and only if $G$ is regular. The fact allows
us to consider the difference $\varepsilon(G)=\rho(G)-\overline{d}(G)$ as a relevant
measure of irregularity of $G$. The authors also proved that, for $n\leq 5$, the
maximum value of $\varepsilon(G)$ is $\sqrt{n-1}-2+2/n$ and the maximal is attained for
the star $S_n$. Fifty years later, Aouchiche et al. \cite{Aouchiche2008} conjectured that
the most irregular connected graph on $n$ ($n\geq10$) vertices is a pineapple graph.
Recently, this conjecture was proved by Tait and Tobin \cite{Tait2016}. In 1992, Bell
\cite{Bell} suggested making the variance $v(G)$ of the vertex degrees of $G$ as a measure
of the irregularity, i.e.,
\[
v(G)=\frac{1}{n}\sum_{i=1}^nd_i^2-\left(\frac{2m}{n}\right)^2.
\]
The author compared $\varepsilon(G)$ and $v(G)$ for various classes of graphs, and showed
that they are not always compatible. Also, the most irregular graphs according to these
measures were determined for certain classes of graphs. In 2006, Nikiforov
\cite{Nikiforov:Degree deviation} introduced
\[
s(G)=\sum_{i\in V(G)}\left|d_i-\frac{2m}{n}\right|
\]
as a new measure of the irregularity of a graph $G$, and showed several inequalities with
respect to $\varepsilon(G)$, $s(G)$ and $v(G)$ as follows:
\begin{equation}
\label{eq:Nikiforov}
\frac{v(G)}{2\sqrt{2m}}\leq\rho(G)-\frac{2m}{n}\leq
\sqrt{s(G)}.
\end{equation}
In particular, for a bipartite graph $G$ with $m$ edges and partition $V(G)=V_1\cupdot V_2$,
Nikiforov \cite{Nikiforov:Degree deviation} defined
\[
s_2(G)=\sum_{i\in V_1} \left|d_i-\frac{m}{n_1}\right|+
\sum_{i\in V_2} \left|d_i-\frac{m}{n_2}\right|
\]
as a more relevant irregularity parameter than $s(G)$, where $n_1=|V_1|$, $n_2=|V_2|$. Also,
it was proved that
\begin{equation}
\label{eq:Nikiforov for bipartite}
\rho(G)-\frac{m}{\sqrt{n_1n_2}}\leq\sqrt{\frac{s_2(G)}{2}}.
\end{equation}
These irregularity measures as well as other attempts to measure the irregularity of a
graph were studied in several works
\cite{Dimitrov2014,Dimitrov2015,Edwards,Henning,Rautenbach}.

Our work in the present paper is to study the irregularity of uniform hypergraphs.
Denote by $\mathcal{H}(n,m)$ the set of all the $r$-uniform hypergraphs with $n$
vertices and $m$ edges. Let $H\in\mathcal{H}(n,m)$ be an $r$-uniform hypergraph, and
$\rho(H)$ be the spectral radius of $H$. In 2012, Cooper and Dutle
\cite{Cooper:Spectra Uniform Hypergraphs} showed that $\rho(H)\geq rm/n$. It is
clear that the equality holds if and only if $H$ is regular by \cite[Theorem 2]{Qi2013}.
Therefore, the value
\[
\varepsilon(H)=\rho(H)-\frac{rm}{n}
\]
can be viewed as a relevant measure of irregularity of $H$. Denote
\[
s(H)=\sum_{i\in V(H)}\left|d_i-\frac{rm}{n}\right|,
\]
where $d_i$ is the degree of vertex $i$ of $H$. Obviously, $s(H)\geq 0$, with equality
if and only if $H$ is regular. Analogous to the graph case, if $H\in\mathcal{H}(n,m)$
is an $r$-partite $r$-uniform hypergraph with partition $V(H)=V_1\cupdot V_2\cupdot\cdots\cupdot V_r$
and $|V_i|=n_i$, $i\in[r]$, we denote
\[
s_r(H)=\sum_{i\in[r]}\sum_{j\in V_i}\left|d_j-\frac{m}{n_i}\right|.
\]
For an $r$-uniform hypergraph $H\in\mathcal{H}(n,m)$, we also denote
\[
v(H)=\frac{1}{n}\sum_{i=1}^nd_i^{\frac{r}{r-1}}-\left(\frac{rm}{n}\right)^{\frac{r}{r-1}}.
\]
It follows from Power Mean inequality that $v(H)\geq 0$, with equality holds if and only if $H$ is
regular.

The main contribution of this paper is proposing some relations among $\varepsilon(H)$, $s(H)$
and $v(H)$, which extend relevant results to uniform hypergraphs. To be precise, we first generalize
\eqref{eq:Nikiforov for bipartite} to $r$-partite $r$-uniform hypergraphs as follows.
\begin{theorem}
\label{thm:Main result-1}
Let $H\in\mathcal{H}(n,m)$ be an $r$-partite $r$-uniform hypergraph with partition
$V(H)=V_1\cupdot V_2\cupdot\cdots\cupdot V_r$. Let $|V_i|=n_i$, $i\in[r]$. Then
\[
\rho(H)-\frac{m}{\sqrt[r]{n_1n_2\cdots n_r}}\leq
\left(\frac{s_r(H)}{2}\right)^{\frac{r-1}{r}}.
\]
\end{theorem}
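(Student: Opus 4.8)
The plan is to run the argument through the Perron--Frobenius eigenvector of the adjacency tensor $\mathcal{A}(H)$, mimicking --- now in $\ell_r$ rather than $\ell_2$ --- the bipartite argument behind \eqref{eq:Nikiforov for bipartite}. First I would reduce to the case that $H$ is connected, the general case following by passing to a component of largest spectral radius together with a short extra estimate. For connected $H$, Perron--Frobenius theory for nonnegative tensors supplies a positive eigenvector $x$ with
\[
\rho(H)\,x_i^{r-1}=\sum_{e\ni i}\ \prod_{j\in e\setminus\{i\}}x_j\qquad(i\in V(H)),
\]
which I normalise so that $\sum_{i\in V(H)}x_i^{r}=1$; then $\rho(H)=\mathcal{A}(H)x^{r}=r\sum_{e\in E(H)}\prod_{i\in e}x_i$.

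Next I would record the facts special to the $r$-partite situation. Since every edge meets each part $V_k$ in exactly one vertex, multiplying the $i$-th eigenequation by $x_i$ and summing over $i\in V_k$ gives $\sum_{e\in E(H)}\prod_{i\in e}x_i=\rho(H)\sum_{i\in V_k}x_i^{r}$ for each $k$; comparing these $r$ identities yields $\sum_{i\in V_k}x_i^{r}=1/r$ for every $k$. This is the hypergraph analogue of the equal-norm property of the Perron vector of a bipartite graph, and it is what makes $m/\sqrt[r]{n_1\cdots n_r}$ the relevant ``regular value'': evaluating the variational lower bound for $\rho(H)$ at the flat vector $y_i=(r\,n_k)^{-1/r}$ for $i\in V_k$ already gives $\rho(H)\ge m/\sqrt[r]{n_1\cdots n_r}$, with equality exactly in the regular case.

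The core step is a transfer from edge products to vertex data. Applying a weighted arithmetic--geometric mean inequality (equivalently, H\"older's inequality with $r$ exponents all equal to $r$) to each factor $\prod_{i\in e}x_i$, with the weights calibrated to the part sizes $n_k$ and the identities $\sum_{i\in V_k}x_i^{r}=1/r$ used to split off the part-average $m/n_k$, one should arrive at an inequality of the shape
\[
\rho(H)-\frac{m}{\sqrt[r]{n_1\cdots n_r}}\ \le\ \frac{1}{\sqrt[r]{n_1\cdots n_r}}\,\sum_{k=1}^{r}n_k\!\!\sum_{\substack{j\in V_k\\ d_j>m/n_k}}\!\!\Bigl(d_j-\frac{m}{n_k}\Bigr)x_j^{\,r},
\]
so that $\rho(H)-m/\sqrt[r]{n_1\cdots n_r}$ is controlled by a weighted pairing of the degree excesses against the eigenvector masses $x_j^{\,r}$.

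The last step --- turning the right-hand side into $\bigl(s_r(H)/2\bigr)^{(r-1)/r}=\bigl(\sum_{k}\sum_{j\in V_k}(d_j-m/n_k)_{+}\bigr)^{(r-1)/r}$ --- is where the real work lies, and is the step I expect to be the main obstacle. The crude estimate $x_j^{\,r}\le 1/r$ is hopeless: it loses the exponent $(r-1)/r$ and leaves the factors $n_k$ uncancelled, and on star-like $r$-partite hypergraphs one can check that this would be off by polynomial factors. Instead one must apply H\"older once more, writing $\bigl(d_j-m/n_k\bigr)_{+}x_j^{\,r}=\bigl[(d_j-m/n_k)_{+}\,x_j^{\,a}\bigr]\cdot\bigl[x_j^{\,r-a}\bigr]$ with the two exponents chosen as $r$ and $r/(r-1)$, and then invoking $\sum_{j}x_j^{\,r}=1$ together with the per-part value $1/r$ so that the weights $n_k$ are exactly absorbed and precisely the power $(r-1)/r$ is produced. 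Making this bookkeeping of the part sizes work out cleanly while landing on the exponent $(r-1)/r$ is the delicate point; for $r=2$ the whole scheme collapses to Nikiforov's bipartite inequality $\rho(G)-m/\sqrt{n_1n_2}\le\sqrt{s_2(G)/2}$, so the argument should be read as its $r$-uniform lift.
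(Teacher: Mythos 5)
Your submission is a proof plan, not a proof: the two steps that would actually establish the theorem are both left open. The preliminary material is correct --- for connected $H$ the Perron eigenvector satisfies $\rho(H)x_i^{r-1}=\sum_{e\ni i}\prod_{j\in e\setminus\{i\}}x_j$, and since every edge meets each part exactly once, summing $x_i$ times the eigenequation over $i\in V_k$ does give $\sum_{i\in V_k}x_i^{r}=1/r$ for all $k$. But the ``core step,'' the inequality bounding $\rho(H)-m/\sqrt[r]{n_1\cdots n_r}$ by a weighted pairing of degree excesses with the masses $x_j^{r}$, is only asserted (``one should arrive at an inequality of the shape\dots''), with no derivation and no specification of the weights; and the final step, converting that weighted sum into $\bigl(s_r(H)/2\bigr)^{(r-1)/r}$, you yourself flag as ``the main obstacle.'' The obstacle is real: H\"older with exponents $r$ and $r/(r-1)$ against $\sum_j x_j^{r}=1$ produces factors of the form $\bigl(\sum_j (d_j-m/n_k)_+^{\,r}x_j^{\,r}\bigr)^{1/r}$ rather than $\bigl(\sum_j(d_j-m/n_k)_+\bigr)^{(r-1)/r}$, and the coefficients $n_k/\sqrt[r]{n_1\cdots n_r}$, which can be as large as $(n-r+1)^{(r-1)/r}$ when the parts are unbalanced, have no visible mechanism for cancellation. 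The reduction to the connected case is also glossed over, and is not innocent here because $m$, the $n_i$ and hence $s_r$ do not restrict well to components. In short, nothing after the identity $\sum_{i\in V_k}x_i^{r}=1/r$ is actually proved.

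For your information, the paper's proof takes an entirely different, combinatorial route that avoids the eigenvector. It rewires at most $s_r(H)$ edges to obtain an $r$-partite $\widehat{H}$ whose degrees within each part differ by at most one (\autoref{coro:Regular}); bounds $\rho(\widehat{H})\le\sqrt[r]{\Delta_1\cdots\Delta_r}\le m/\sqrt[r]{n_1\cdots n_r}+(n/r)^{1-1/r}$ using \autoref{lem:r-partite} together with Maclaurin's inequality; controls the edited edges via the Weyl-type inequality $\rho(H)\le\rho(H_2)+\rho(\widehat{H})$ and the bound $\rho(H_2)\le |E(H)\setminus E(\widehat{H})|^{(r-1)/r}$ for $r$-partite hypergraphs (\autoref{lem:Weyl's inequality} and \autoref{lem:rho<m}); and finally eliminates the additive error term $(n/r)^{1-1/r}$ by applying the resulting estimate to the blow-up $H(k,\ldots,k)$ and letting $k\to\infty$. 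If you wish to pursue your eigenvector approach you must supply both missing inequalities explicitly; in any case the blow-up limit at the end of the paper's argument is worth internalizing, since it is precisely what upgrades a bound with a lower-order additive term to the clean statement.
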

The main frame of \autoref{thm:Main result-1} is inspired by that of \cite{Nikiforov:Degree deviation}.
By virtue of \autoref{thm:Main result-1} and the direct product operation of hypergraphs,
we obtain the following result concerning $\varepsilon(H)$, $s(H)$ and $v(H)$, which
generalize the result \eqref{eq:Nikiforov}.
\begin{theorem}
\label{thm:Main result-2}
Let $H\in\mathcal{H}(n,m)$. Then
\[
\frac{r-1}{\sqrt[r]{m}}\left(\frac{\sqrt[r]{r!}}{r^r}\right)^{\frac{1}{r-1}}v(H)
\leq\rho(H)-\frac{rm}{n}
\leq\frac{r}{\sqrt[r]{r!}}\left(\frac{s(H)}{2}\right)^{\frac{r-1}{r}}.
\]
\end{theorem}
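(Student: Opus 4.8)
The plan is to prove the two inequalities separately: the upper bound by transporting \autoref{thm:Main result-1} to $H$ through an $r$-partite blow-up, and the lower bound by the variational description of $\rho(H)$ together with a degree-weighted test vector, in the spirit of Nikiforov's proof of \eqref{eq:Nikiforov}.

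\textbf{The upper bound.} Given $H\in\mathcal H(n,m)$, form an $r$-partite $r$-uniform hypergraph $\widetilde H$ on the vertex set $V(H)\times[r]$ with parts $V_k=\{(v,k):v\in V(H)\}$, so that $|V_k|=n$ for each $k\in[r]$, and with one edge $\{(\sigma(1),1),\dots,(\sigma(r),r)\}$ for every $e=\{u_1,\dots,u_r\}\in E(H)$ and every bijection $\sigma\colon[r]\to e$; this is the direct-product-type operation alluded to in the introduction. Three facts then need to be verified: $m(\widetilde H)=r!\,m$ (each edge of $H$ yields $r!$ distinct edges of $\widetilde H$); $\deg_{\widetilde H}(v,k)=(r-1)!\,d_v$ (only $(r-1)!$ of those $r!$ edges contain a prescribed $(v,k)$); and $\rho(\widetilde H)=(r-1)!\,\rho(H)$ (replicate a Perron eigenvector of a component of $H$ across the $r$ layers and check the eigenequation of $\widetilde H$ to exhibit $(r-1)!\,\rho(H)$ as its spectral radius). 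Consequently
\[
\frac{m(\widetilde H)}{\sqrt[r]{|V_1|\cdots|V_r|}}=\frac{r!\,m}{n},\qquad
s_r(\widetilde H)=r\cdot(r-1)!\,s(H)=r!\,s(H),
\]
and \autoref{thm:Main result-1} applied to $\widetilde H$ reads $(r-1)!\bigl(\rho(H)-\tfrac{rm}{n}\bigr)\le(r!)^{(r-1)/r}\bigl(\tfrac{s(H)}{2}\bigr)^{(r-1)/r}$. Dividing by $(r-1)!$ and using the identity $(r!)^{(r-1)/r}/(r-1)!=r/\sqrt[r]{r!}$ gives the right-hand inequality; for disconnected $H$ one argues componentwise, the spectral radius being attained on a single component.

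\textbf{The lower bound.} Here I would use the Perron--Frobenius variational principle for the adjacency tensor,
\[
\rho(H)=\max\Bigl\{\,r\sum_{e\in E(H)}\prod_{v\in e}x_v\ :\ \mathbf x\ge\mathbf 0,\ \sum_{i\in V(H)}x_i^r=1\,\Bigr\},
\]
and plug in the degree-power test vector $x_i=d_i^{1/r}\big/\bigl(\sum_j d_j\bigr)^{1/r}$ (the $r$-uniform counterpart of the vector $\sqrt d$ used for graphs), which yields
\[
\rho(H)\ \ge\ \frac1m\sum_{e\in E(H)}\Bigl(\prod_{v\in e}d_v\Bigr)^{1/r}.
\]
It remains to prove the purely arithmetic estimate
\[
\frac1m\sum_{e\in E(H)}\Bigl(\prod_{v\in e}d_v\Bigr)^{1/r}\ \ge\ \frac{rm}{n}+\frac{r-1}{\sqrt[r]{m}}\Bigl(\frac{\sqrt[r]{r!}}{r^r}\Bigr)^{\frac1{r-1}}v(H).
\]
I would attack this by writing each $d_v$ as $\tfrac{rm}{n}$ plus a deviation, bounding the left-hand side from below via AM--GM/H\"older applied edgewise together with the identity $\sum_i d_i^{r/(r-1)}=\sum_{e}\sum_{v\in e}d_v^{1/(r-1)}$, and finally passing from the $\ell^2$-type degree deviation that appears to the $\tfrac{r}{r-1}$-th power deviation $v(H)$ through a convexity (power-mean) inequality on the degree sequence.

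\textbf{Main obstacle.} The blow-up step is routine once the construction is set up; the crux is the last displayed inequality. The functional $\frac1m\sum_e(\prod_{v\in e}d_v)^{1/r}$ is of geometric-mean type and is tight both for regular $H$ and for star-like $H$, so no per-edge lower bound can succeed and the estimate must be genuinely global; moreover the quantities that arise must be reconciled with the $r/(r-1)$-power mean defining $v(H)$, and the constant $\frac{r-1}{\sqrt[r]{m}}(\sqrt[r]{r!}/r^r)^{1/(r-1)}$ — which must collapse to $1/(2\sqrt{2m})$ at $r=2$ — has to be carried exactly through the whole chain. That bookkeeping is where I expect the real difficulty to lie.
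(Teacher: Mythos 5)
Your upper bound is, in essence, the paper's own argument: the paper also sets $\widetilde H=H\times K_r^r$, verifies $|E(\widetilde H)|=r!\,m$, $d_{\widetilde H}(i,j)=(r-1)!\,d_i$, $s_r(\widetilde H)=r!\,s(H)$ and $\rho(\widetilde H)=(r-1)!\,\rho(H)$, and then applies \autoref{thm:Main result-1} to $\widetilde H$; your bookkeeping and the simplification $(r!)^{(r-1)/r}/(r-1)!=r/\sqrt[r]{r!}$ are correct, and for the application only the inequality $\rho(\widetilde H)\ge(r-1)!\,\rho(H)$ is needed, which your eigenvector replication does deliver. That half is sound.

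The lower bound, however, contains a genuine gap, and you have located it yourself: everything is reduced to the unproven estimate
\[
\frac1m\sum_{e\in E(H)}\Bigl(\prod_{v\in e}d_v\Bigr)^{1/r}\ \ge\ \frac{rm}{n}+\frac{r-1}{\sqrt[r]{m}}\Bigl(\frac{\sqrt[r]{r!}}{r^r}\Bigr)^{\frac{1}{r-1}}v(H),
\]
and the tools you list will not close it. The identity $\sum_i d_i^{r/(r-1)}=\sum_{e}\sum_{v\in e}d_v^{1/(r-1)}$ combined with edgewise AM--GM produces the exponent $1/(r(r-1))$ rather than $1/r$, and in any case it bounds the vertex power sum from \emph{below} by an edge sum, which is the opposite of the direction you need. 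The paper sidesteps this entirely by starting from a different spectral lower bound, namely \autoref{lem:rho bound concerning degrees}: $\rho(H)\ge\bigl(\frac1n\sum_i d_i^{r/(r-1)}\bigr)^{(r-1)/r}$, the hypergraph analogue of Hofmeister's bound. Writing $\rho=\rho(H)$, this gives at once $v(H)\le\rho^{r/(r-1)}-(rm/n)^{r/(r-1)}$; rewriting the right-hand side as $\frac{r}{r-1}\rho^{r/(r-1)}-\bigl[\frac{1}{r-1}\rho^{r/(r-1)}+(rm/n)^{r/(r-1)}\bigr]$ and applying AM--GM to the bracket (one copy of $\rho^{r/(r-1)}$ and $r-1$ copies of $(rm/n)^{r/(r-1)}$) yields $v(H)\le\frac{r}{r-1}\rho^{1/(r-1)}\bigl(\rho-\frac{rm}{n}\bigr)$, after which \autoref{lem:rho<m} converts $\rho^{1/(r-1)}$ into $(r/\sqrt[r]{r!})^{1/(r-1)}m^{1/r}$ and produces exactly the stated constant. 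If you insist on your test-vector bound, you would first have to show that $\frac1m\sum_e(\prod_{v\in e}d_v)^{1/r}$ dominates $\bigl(\frac1n\sum_i d_i^{r/(r-1)}\bigr)^{(r-1)/r}$, which is a nontrivial claim in its own right and is not addressed by your sketch.
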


\section{Preliminaries}
\label{sec2}
In this section, we first present some necessarily notions and definitions of hypergraphs
and tensors which will be used in the sequel.

A hypergraph $H =(V(H),E(H))$ is a pair consisting of a vertex set $V(H)=[n]$, and a
set $E(H)=\{e_1,e_2,\ldots,e_m\}$ of subsets of $V(H)$, the edges of $H$. For a vertex
$i\in V(H)$, the {\em degree} of $i$, denoted by $d_H(i)$ or simply by $d_i$, is the
number of edges containing $i$. A hypergraph is called {\em regular} if all its vertices
have the same degree, otherwise it is {\em irregular}. The minimum and maximum degrees among
the vertices of $H$ are denoted by $\delta(H)$ and $\Delta(H)$, respectively. An $r$-uniform
hypergraph $H$ is called $k$-{\em partite} if its vertex set $V(H)$ can be partitioned into
$k$ sets such that each edge contains at most one vertex from each set. An edge maximal
$k$-partite $r$-uniform hypergraph is called {\em complete $k$-partite}.

Let $H_1$ and $H_2$ be two $r$-uniform hypergraphs. Denote by $H_1\cup H_2$ the {\em union} of
$H_1$ and $H_2$, i.e., $V(H_1\cup H_2)=V(H_1)\cup V(H_2)$ and $E(H_1\cup H_2)=E(H_1)\cup E(H_2)$.
The {\em direct product} $H_1\times H_2$ of $H_1$ and $H_2$ is defined as an $r$-uniform
hypergraph with vertex set $V(H_1\times H_2) = V(H_1)\times V(H_2)$, and
$\{(i_1,j_1),(i_2,j_2),\ldots,(i_r,j_r)\}\in E(H_1\times H_2)$ if and only if
$\{i_1,i_2,\ldots,i_r\}\in E(H_1)$ and $\{j_1,j_2,\ldots,j_r\}\in E(H_2)$.

For positive integers $r$ and $n$, a real {\em tensor} $\mathcal{A}=(a_{i_1i_2\cdots i_r})$ of
order $r$ and dimension $n$ refers to a multidimensional array (also called {\em hypermatrix})
with entries $a_{i_1i_2\cdots i_r}$ such that $a_{i_1i_2\cdots i_r}\in\mathbb{R}$ for all
$i_1$, $i_2$, $\ldots$, $i_r\in[n]$. The following general product of tensors was defined by
Shao \cite{Shao:General product}, which is a generalization of the matrix case.

\begin{definition}[\cite{Shao:General product}]
\label{defn:General product}
Let $\mathcal{A}$ (and $\mathcal{B}$) be an order $r\geq 2$ (and order $k\geq 1$),
dimension $n$ tensor. Define the product $\mathcal{AB}$ to be the following tensor $\mathcal{C}$
of order $(r-1)(k-1)+1$ and dimension $n$
\[
c_{i\alpha_1\cdots\alpha_{r-1}}=\sum_{i_2,\ldots,i_r=1}^na_{ii_2\cdots i_r}
b_{i_2\alpha_1}\cdots b_{i_r\alpha_{r-1}}
~~(i\in [n], \alpha_1,\ldots,\alpha_{r-1}\in [n]^{k-1}).
\]
\end{definition}
From the above definition, if $x=(x_1,x_2,\ldots,x_n)^{\mathrm{T}}$ is a vector, we have
\begin{equation}
\label{eq:Ax equation}
(\mathcal{A}x)_i=\sum_{i_2,\ldots,i_r=1}^na_{ii_2\cdots i_r}x_{i_2}\cdots x_{i_r},~~
i\in [n].
\end{equation}

In 2005, Qi \cite{Qi2005} and Lim \cite{Lim} independently introduced the definition of
eigenvalues of a tensor. Let $\mathcal{A}$ be an order $r$ dimension $n$ tensor,
$x=(x_1,x_2,\ldots,x_n)^{\mathrm{T}}\in\mathbb{C}^n$ be a column vector of dimension $n$.
If there exists a number $\lambda\in\mathbb{C}$ and a nonzero vector $x\in\mathbb{C}^{n}$
such that
\[
\mathcal{A}x=\lambda x^{[r-1]},
\]
then $\lambda$ is called an {\em eigenvalue} of $\mathcal{A}$, $x$ is called an
{\em eigenvector} of $\mathcal{A}$ corresponding to the eigenvalue $\lambda$,
where $x^{[r-1]}$ is the Hadamard power of $x$, i.e.,
$x^{[r-1]}=(x_{1}^{r-1},x_2^{r-1},\ldots,x_{n}^{r-1})^{\mathrm{T}}$. The {\em spectral
radius} of $\mathcal{A}$, denoted by $\rho(\mathcal{A})$, is the maximum modulus of the
eigenvalues of $\mathcal{A}$.

In 2012, Cooper and Dutle \cite{Cooper:Spectra Uniform Hypergraphs}
defined the adjacency tensors $\mathcal{A}(H)$ for an $r$-uniform hypergraphs $H$.

\begin{definition}
[\cite{Cooper:Spectra Uniform Hypergraphs}]
Let $H=(V(H),E(H))$ be an $r$-uniform hypergraph on $n$ vertices. The adjacency
tensor of $H$ is defined as the order $r$ and dimension $n$ tensor
$\mathcal{A}(H)=(a_{i_1i_2\cdots i_r})$, whose $(i_1i_2\cdots i_r)$-entry is
\[
a_{i_1i_2\cdots i_r}=\begin{cases}
\frac{1}{(r-1)!}, & \text{if}~\{i_1,i_2,\ldots,i_r\}\in E(H),\\
0, & \text{otherwise}.
\end{cases}
\]
\end{definition}

For an $r$-uniform hypergraph $H$, the spectral radius of $H$, denoted by $\rho(H)$,
is defined to be that of its adjacency tensor $\mathcal{A}(H)$. In general, an
$r$-uniform hypergraph $H$ can be decomposed into components $H_i=(V(H_i),E(H_i))$
for $i=1$, $2$, $\ldots$, $s$. Denote the spectral radii of $H$ and $H_i$ by $\rho(H)$
and $\rho(H_i)$, respectively.  Theorem 3.3 in \cite{Qi2014} implies that
\[
\rho(H)=\max_{1\leq i\leq s}\{\rho(H_i)\}.
\]

Friedland et al. \cite{Friedland2013} defined the weak irreducibility of a nonnegative tensor
by using the strong connectivity of a graph associated to the nonnegative tensor. Later,
Yang et al. \cite{Yang2011-2} presented an equivalent definition of the weak irreducibility
from the algebraic point of view.
\begin{definition}[\cite{Yang2011-2}]
Let $\mathcal{A}$ be an order $r$ dimension $n$ tensor. If there exists a nonempty
proper index subset $I\subseteq [n]$ such that
\[
a_{i_1i_2\cdots i_r}=0~~(\forall~i_1\in I,~\text{and at least one of}~i_2,\ldots,i_r\notin I).
\]
Then $\mathcal{A}$ is called weakly reducible. If $\mathcal{A}$ is not weakly reducible,
then $\mathcal{A}$ is called weakly irreducible.
\end{definition}

It was proved that an $r$-uniform hypergraph $H$ is connected if and only if its adjacency
tensor $\mathcal{A}(H)$ is weakly irreducible (see \cite{Pearson2014}).

Let $\mathcal{A}=(a_{i_1i_2\cdots i_r})$ be a nonnegative tensor of order $r$ and dimension $n$.
For any $i\in [n]$, we write
\[
r_i(\mathcal{A})=\sum_{i_2,\ldots,i_r=1}^na_{ii_2\cdots i_r}.
\]
The following bound for $\rho(\mathcal{A})$ in terms of $r_i(\mathcal{A})$ was proposed
in \cite{Yang2010}, and the conditions for the equal cases were studied in \cite{Fan2015}.

\begin{lemma}[\cite{Fan2015,Yang2010}]
\label{lem:rho(A) upper bound}
Let $\mathcal{A}$ be a nonnegative tensor of order $r$ and dimension $n$. Then
\begin{equation}
\label{eq:r_i equality}
\min_{1\leq i\leq n}r_i(\mathcal{A})\leq\rho(\mathcal{A})\leq
\max_{1\leq i\leq n}r_i(\mathcal{A}).
\end{equation}
Moreover, if $\mathcal{A}$ is weakly irreducible, then one of the equalities in
\eqref{eq:r_i equality} holds if and only if
$r_1(\mathcal{A})=r_2(\mathcal{A})=\cdots=r_n(\mathcal{A})$.
\end{lemma}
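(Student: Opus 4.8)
The plan is to read off both inequalities in \eqref{eq:r_i equality} from the Perron--Frobenius theory for nonnegative tensors (see \cite{Friedland2013,Yang2010}), which furnishes a nonnegative eigenvector $x$ of $\mathcal{A}$ for $\rho:=\rho(\mathcal{A})$, and a strictly positive one when $\mathcal{A}$ is weakly irreducible. For the upper bound, choose $k$ with $x_k=\max_{i}x_i>0$; since $x_{i_2}\cdots x_{i_r}\le x_k^{r-1}$ for every index tuple, evaluating \eqref{eq:Ax equation} at coordinate $k$ gives $\rho\,x_k^{r-1}=(\mathcal{A}x)_k\le x_k^{r-1}\sum_{i_2,\ldots,i_r}a_{ki_2\cdots i_r}=r_k(\mathcal{A})\,x_k^{r-1}$, whence $\rho\le r_k(\mathcal{A})\le\max_{1\le i\le n}r_i(\mathcal{A})$.

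The lower bound is the one point that needs care: the symmetric argument with the smallest coordinate of $x$ breaks down when that coordinate is $0$. I would instead perturb. Let $\mathcal{J}$ be the all-ones tensor of order $r$ and dimension $n$; for $\varepsilon>0$ the tensor $\mathcal{A}_{\varepsilon}=\mathcal{A}+\varepsilon\mathcal{J}$ is entrywise positive, hence weakly irreducible, hence carries a strictly positive eigenvector $y$ for $\rho(\mathcal{A}_{\varepsilon})$. Picking $\ell$ with $y_{\ell}=\min_i y_i>0$ and running the computation above with the inequality reversed yields $\rho(\mathcal{A}_{\varepsilon})\ge r_{\ell}(\mathcal{A}_{\varepsilon})\ge\min_{1\le i\le n}r_i(\mathcal{A}_{\varepsilon})$. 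Now let $\varepsilon\to 0^{+}$: the spectral radius is continuous and $r_i(\mathcal{A}_{\varepsilon})=r_i(\mathcal{A})+\varepsilon n^{r-1}$, so $\min_{1\le i\le n}r_i(\mathcal{A})\le\rho(\mathcal{A})$.

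For the equality clause, suppose $\mathcal{A}$ is weakly irreducible, so $x$ may be taken with all coordinates positive, and suppose $\rho=\max_{i}r_i(\mathcal{A})$ (the case $\rho=\min_i r_i(\mathcal{A})$ is symmetric, using the smallest coordinate of $x$). Put $I=\{i:x_i=\max_j x_j\}$, a nonempty set. For each $i_1\in I$ one has $x_{i_2}\cdots x_{i_r}\le x_{i_1}^{r-1}$, so the chain $\rho\,x_{i_1}^{r-1}=(\mathcal{A}x)_{i_1}\le r_{i_1}(\mathcal{A})\,x_{i_1}^{r-1}\le\max_i r_i(\mathcal{A})\,x_{i_1}^{r-1}=\rho\,x_{i_1}^{r-1}$ collapses to equalities; thus $\sum_{i_2,\ldots,i_r}a_{i_1i_2\cdots i_r}\bigl(x_{i_1}^{r-1}-x_{i_2}\cdots x_{i_r}\bigr)=0$ with every summand nonnegative, and $a_{i_1i_2\cdots i_r}>0$ forces $x_{i_2}=\cdots=x_{i_r}=x_{i_1}$, i.e.\ $i_2,\ldots,i_r\in I$. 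Hence $I$ is a nonempty index set with $a_{i_1i_2\cdots i_r}=0$ whenever $i_1\in I$ and at least one of $i_2,\ldots,i_r$ lies outside $I$; weak irreducibility rules out such a proper $I$, so $I=[n]$ and $x$ is a constant vector. Substituting $x=c\mathbf{1}$ (with $c>0$) into $\mathcal{A}x=\rho x^{[r-1]}$ gives $r_i(\mathcal{A})=\rho$ for every $i$, i.e.\ all row sums coincide. Conversely, if $r_1(\mathcal{A})=\cdots=r_n(\mathcal{A})=:\rho_0$, then $\mathbf{1}$ is a strictly positive eigenvector of $\mathcal{A}$ with eigenvalue $\rho_0$, and the uniqueness part of the Perron--Frobenius theorem for weakly irreducible tensors forces $\rho_0=\rho(\mathcal{A})$; hence both inequalities in \eqref{eq:r_i equality} are equalities.

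The main obstacle is exactly the lower bound in the reducible case, where the minimal-coordinate argument fails and must be replaced by the $\mathcal{A}+\varepsilon\mathcal{J}$ limiting argument; everything else is direct manipulation of the eigenequation, once one observes that the set on which a Perron vector attains its maximum (resp.\ minimum) coordinate is precisely the type of index set excluded by weak irreducibility.
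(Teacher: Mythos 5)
The paper offers no proof of this lemma at all: it is imported verbatim from Yang--Yang (2010) for the two-sided bound and from Khan--Fan (2015) for the equality characterization, so there is no ``paper's route'' to compare against. Your reconstruction is essentially the standard one and I find it correct. The upper bound via the maximal coordinate of a nonnegative Perron vector, the $\mathcal{A}+\varepsilon\mathcal{J}$ perturbation to rescue the lower bound when the minimal coordinate may vanish, and the identification of the level set $I=\{i:x_i=\max_j x_j\}$ as exactly the kind of index set forbidden by weak irreducibility are all sound; the converse direction via the positive eigenvector $\mathbf{1}$ is also fine. Be aware, though, that your argument leans on three external inputs that you should cite rather than treat as free: (i) that $\rho(\mathcal{A})$ admits a nonzero nonnegative eigenvector for a \emph{general} nonnegative tensor (this is itself a nontrivial limiting theorem of Yang--Yang, not the irreducible-case statement of Chang--Pearson--Zhang); (ii) continuity of $\rho$ in the entries, which you assert in one clause but which deserves a reference (or can be bypassed entirely by invoking the Collatz--Wielandt formula $\rho(\mathcal{A})=\max_{x\geq 0,\,x\neq 0}\min_{x_i>0}(\mathcal{A}x)_i/x_i^{r-1}$ with $x=\mathbf{1}$, which gives the lower bound in one line and makes the perturbation unnecessary); and (iii) that an eigenvalue carried by a strictly positive eigenvector of a weakly irreducible nonnegative tensor must equal $\rho$. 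None of these is circular with the row-sum bounds you are proving, so the argument stands, but a referee would want the citations made explicit.
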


\begin{lemma}[\cite{Shao:General product,Yang2011}]
\label{lem:Same spectra}
Let $\mathcal{A}$ and $\mathcal{B}$ be two order $r$ dimension $n$ tensors.
If there is a nonsingular diagonal matrix $P$ of order $n$ such that
$\mathcal{B}=P^{-(r-1)}\mathcal{A}P$, then $\mathcal{A}$ and $\mathcal{B}$
have the same eigenvalues.	
\end{lemma}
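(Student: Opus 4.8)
The plan is to prove the statement directly from the definition of the general tensor product (\autoref{defn:General product}) and the eigenvalue equation $\mathcal{A}x=\lambda x^{[r-1]}$, by exhibiting an explicit bijection between eigenpairs of $\mathcal{A}$ and eigenpairs of $\mathcal{B}$.

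First I would unwind the similarity relation at the level of entries. Writing $P=\mathrm{diag}(p_1,\ldots,p_n)$ with all $p_i\neq0$, I would apply \autoref{defn:General product} twice, using associativity of the general product established in \cite{Shao:General product}: computing $\mathcal{A}P$ first collapses the diagonal factors $P_{i_j\alpha_{j-1}}=p_{i_j}\delta_{i_j\alpha_{j-1}}$, giving $(\mathcal{A}P)_{ii_2\cdots i_r}=a_{ii_2\cdots i_r}\,p_{i_2}\cdots p_{i_r}$, and then left-multiplying by the diagonal matrix $P^{-(r-1)}$ scales row $i$ by $p_i^{-(r-1)}$. The net effect is the explicit entrywise formula
\[
b_{ii_2\cdots i_r}=p_i^{-(r-1)}\,p_{i_2}\cdots p_{i_r}\,a_{ii_2\cdots i_r}.
\]

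Next I would use this formula to transport eigenpairs. Suppose $\mathcal{A}x=\lambda x^{[r-1]}$ with $x\neq0$, and set $y=P^{-1}x$, so that $y_i=x_i/p_i$; since $P$ is nonsingular, $y\neq0$. Substituting into \eqref{eq:Ax equation} together with the entry formula above, each product $p_{i_2}\cdots p_{i_r}$ carried by the entries of $\mathcal{B}$ cancels against the denominators produced by $y_{i_2}\cdots y_{i_r}=(x_{i_2}/p_{i_2})\cdots(x_{i_r}/p_{i_r})$, leaving
\[
(\mathcal{B}y)_i=p_i^{-(r-1)}(\mathcal{A}x)_i=p_i^{-(r-1)}\lambda x_i^{r-1}=\lambda\left(\frac{x_i}{p_i}\right)^{r-1}=\lambda y_i^{r-1}.
\]
Hence $\mathcal{B}y=\lambda y^{[r-1]}$, so every eigenvalue of $\mathcal{A}$ is an eigenvalue of $\mathcal{B}$.

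Finally, the reverse inclusion follows by symmetry: the entry formula rearranges to $a_{ii_2\cdots i_r}=p_i^{\,r-1}\,p_{i_2}^{-1}\cdots p_{i_r}^{-1}\,b_{ii_2\cdots i_r}$, i.e. $\mathcal{A}=(P^{-1})^{-(r-1)}\mathcal{B}(P^{-1})$, which is of exactly the same form with $P$ replaced by the nonsingular diagonal matrix $P^{-1}$; the identical computation then shows every eigenvalue of $\mathcal{B}$ is an eigenvalue of $\mathcal{A}$, so the two eigenvalue sets coincide. The only delicate point is the first step—correctly expanding the two-fold general product with diagonal factors and tracking which index each $p$ attaches to—after which the cancellation in $(\mathcal{B}y)_i$ is immediate and the remainder is automatic.
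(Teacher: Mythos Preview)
Your argument is correct. The paper itself does not supply a proof of \autoref{lem:Same spectra}; it is quoted from \cite{Shao:General product,Yang2011} and only the entrywise identity \eqref{eq:PAP} is recorded in the subsequent remark. What you have written is precisely the standard proof behind those citations: derive $b_{ii_2\cdots i_r}=p_i^{-(r-1)}p_{i_2}\cdots p_{i_r}a_{ii_2\cdots i_r}$, substitute $y=P^{-1}x$, and observe the telescoping cancellation that yields $(\mathcal{B}y)_i=p_i^{-(r-1)}(\mathcal{A}x)_i=\lambda y_i^{r-1}$. The symmetry step via $\mathcal{A}=(P^{-1})^{-(r-1)}\mathcal{B}P^{-1}$ is also fine. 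There is nothing to add; your derivation even reproduces the paper's formula \eqref{eq:PAP} as an intermediate step.
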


\begin{remark}
Let $P=\text{diag}\{p_1,p_2,\ldots,p_n\}$ be a nonsingular diagonal matrix,
and $\mathcal{A}(H)=(a_{i_1i_2\cdots i_r})$ be the adjacency tensor of an
$r$-uniform hypergraph $H$. According to \autoref{defn:General product}, we have
\begin{equation}
\label{eq:PAP}
(P^{-(r-1)}\mathcal{A}(H)P)_{i_1i_2\cdots i_r}=
p_{i_1}^{-(r-1)}a_{i_1i_2\cdots i_r}p_{i_2}\cdots p_{i_r}.
\end{equation}
\end{remark}

\begin{lemma}[\cite{Nikiforov}]
\label{lem:rho<m}
Let $H\in\mathcal{H}(n,m)$ be an $r$-uniform hypergraph. Then
\[
\rho(H)\leq\frac{r}{\sqrt[r]{r!}}m^{\frac{r-1}{r}}.
\]
Moreover, if $H$ is $r$-partite, then
\[
\rho(H)\leq m^{\frac{r-1}{r}},
\]
equality holds if and only if $H$ is complete $r$-partite.
\end{lemma}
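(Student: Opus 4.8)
The plan is to combine the eigenvector description of $\rho(H)$ with a single application of Hölder's inequality. First I would reduce to connected $H$: if the components of $H$ are $H_1,\dots,H_s$ with $|E(H_t)|=m_t\le m$, then $\rho(H)=\max_t\rho(H_t)$ and each $H_t$ inherits $r$-partiteness, so both displayed bounds follow from the connected case (for the equality statement one notes in addition that equality forces one component to carry all $m$ edges, the others being edgeless). So assume $H$ is connected, hence $\mathcal{A}(H)$ is weakly irreducible; by the Perron--Frobenius theorem for nonnegative tensors there is a positive eigenvector $x$ with $\mathcal{A}(H)x=\rho(H)x^{[r-1]}$, and we scale it so that $\sum_{i\in V(H)}x_i^r=1$. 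Taking the inner product of the eigen-equation with $x$ and using that in $\mathcal{A}(H)$ every edge $e=\{v_1,\dots,v_r\}$ accounts for $r!$ entries equal to $1/(r-1)!$, each multiplying $\prod_{v\in e}x_v$, we obtain the key identity
\[
\rho(H)=x^{\mathrm T}\big(\mathcal{A}(H)x\big)=r\sum_{e\in E(H)}\prod_{v\in e}x_v .
\]

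Next I would estimate the form on the right. Hölder's inequality applied to the $m$ nonnegative summands with conjugate exponents $r$ and $r/(r-1)$ gives
\[
\sum_{e\in E(H)}\prod_{v\in e}x_v\ \le\ m^{\frac{r-1}{r}}\Big(\sum_{e\in E(H)}\prod_{v\in e}x_v^r\Big)^{1/r}.
\]
Writing $u_i=x_i^r\ge0$ (so $\sum_i u_i=1$) and noting that $E(H)$ is a family of $r$-element subsets of $V(H)$, we bound the inner sum by the $r$-th elementary symmetric polynomial $e_r(u)=\sum_{|S|=r}\prod_{i\in S}u_i$; expanding $\big(\sum_i u_i\big)^r$ and discarding the terms in which some index repeats shows $r!\,e_r(u)\le\big(\sum_i u_i\big)^r=1$. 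Hence $\sum_{e\in E(H)}\prod_{v\in e}x_v^r\le 1/r!$, and substituting back into the two displays yields $\rho(H)\le\frac{r}{\sqrt[r]{r!}}\,m^{\frac{r-1}{r}}$.

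For the $r$-partite refinement, write $V(H)=V_1\cupdot\cdots\cupdot V_r$ and express each edge uniquely as a transversal $(v_1,\dots,v_r)$ with $v_l\in V_l$; then $E(H)$ is a subfamily of $V_1\times\cdots\times V_r$, so with $S_l:=\sum_{v\in V_l}x_v^r$,
\[
\sum_{e\in E(H)}\prod_{v\in e}x_v^r\ \le\ \sum_{(v_1,\dots,v_r)\in V_1\times\cdots\times V_r}\prod_{l=1}^{r}x_{v_l}^r\ =\ \prod_{l=1}^{r}S_l\ \le\ r^{-r},
\]
the last step by AM--GM together with $S_1+\cdots+S_r=\sum_i x_i^r=1$. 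Feeding this into the Hölder bound above gives $\sum_e\prod_{v\in e}x_v\le m^{\frac{r-1}{r}}/r$, hence $\rho(H)\le m^{\frac{r-1}{r}}$. For the equality case, if $H$ is complete $r$-partite with $|V_l|=n_l$ then by symmetry the Perron vector is constant on each part, the eigen-equations become $\rho\,x_v^{r-1}=\prod_{l'\ne l}\big(\sum_{w\in V_{l'}}x_w\big)$ for $v\in V_l$, and multiplying the $r$ relations gives $\rho^r=(n_1\cdots n_r)^{r-1}=m^{r-1}$; conversely $\rho(H)=m^{\frac{r-1}{r}}$ forces equality in all three estimates above, i.e.\ all products $\prod_{v\in e}x_v$ are equal over $e\in E(H)$, every transversal of $V_1\times\cdots\times V_r$ is an edge (using $x>0$) so that $H$ is complete $r$-partite, and $S_1=\cdots=S_r=1/r$, which are mutually consistent.

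Once the key identity is in place the inequality is essentially a one-line Hölder estimate, so the real work lies elsewhere. The main obstacle I anticipate is twofold: (a) establishing the key identity cleanly --- getting the multiplicity $r!/(r-1)!=r$ right and invoking Perron--Frobenius for (weakly irreducible) nonnegative tensors to produce a usable eigenvector; and (b) the equality analysis in the $r$-partite part, where one must check that tightness in Hölder, in the transversal-containment step, and in AM--GM holds simultaneously exactly for complete $r$-partite hypergraphs, and separately rule out spurious equality cases coming from isolated vertices or from disconnected $H$.
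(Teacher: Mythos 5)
Your proof is essentially correct, but note that the paper itself gives no proof of this lemma: it is imported wholesale from Nikiforov's \emph{Analytic methods for uniform hypergraphs} \cite{Nikiforov}, so there is nothing internal to compare against. What you have written is a sound self-contained reconstruction in the spirit of the original source: the identity $\rho(H)=r\sum_{e\in E(H)}\prod_{v\in e}x_v$ is right (each edge contributes $r!$ entries of size $1/(r-1)!$), the H\"older step with exponents $r$ and $r/(r-1)$ is correct, the bound $r!\,e_r(u)\le(\sum_i u_i)^r$ does follow from the multinomial expansion, and the $r$-partite refinement via $\sum_e\prod_{v\in e}x_v^r\le\prod_l S_l\le r^{-r}$ together with the transversal-containment equality analysis (using positivity of the Perron vector) is the standard and correct route. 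Two small remarks. First, you could sidestep Perron--Frobenius and the reduction to connected components entirely by using the variational characterization $\rho(H)=\max\{x^{\mathrm T}(\mathcal{A}(H)x):x\ge0,\ \sum_i x_i^r=1\}$ from \cite[Theorem 2]{Qi2013}, which this paper already invokes in its Remark 3.1; your H\"older argument applies verbatim to the maximizer, and then connectivity is only needed for the equality discussion. Second, the caveat you flag about isolated vertices is real: a complete $r$-partite hypergraph with extra edgeless vertices attains $\rho(H)=m^{(r-1)/r}$ without being edge-maximal $r$-partite, so the ``only if'' direction of the equality statement must be read modulo isolated vertices (or under a no-isolated-vertex convention); since the lemma is quoted rather than proved here, the paper does not resolve this either, and your awareness of it is the right instinct.
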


The Weyl type inequality for uniform hypergraphs is stated as follows.
\begin{lemma}[\cite{Nikiforov}]
\label{lem:Weyl's inequality}
Let $H_1$ and $H_2$ be $r$-uniform hypergraphs. Then
\[
\rho(H_1\cup H_2)\leq \rho(H_1)+\rho(H_2).
\]
\end{lemma}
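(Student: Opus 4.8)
\emph{Proof plan.} The plan is to deduce the inequality from the variational (Rayleigh--type) description of the spectral radius of the adjacency tensor. First I would record the polynomial form of $\mathcal{A}(H)$: for a vector $x=(x_1,\ldots,x_n)^{\mathrm{T}}$, combining \eqref{eq:Ax equation} with the definition of $\mathcal{A}(H)$ and the fact that each edge $e\in E(H)$ accounts for $r!$ ordered $r$-tuples, each carrying the weight $1/(r-1)!$, one gets
\[
x^{\mathrm{T}}(\mathcal{A}(H)x)=\sum_{i_1,\ldots,i_r=1}^{n}a_{i_1i_2\cdots i_r}x_{i_1}x_{i_2}\cdots x_{i_r}
=r\sum_{e\in E(H)}\prod_{v\in e}x_v .
\]
Next I would invoke the Perron--Frobenius theory for nonnegative (weakly irreducible) symmetric tensors (cf.\ \cite{Qi2005}), together with the component formula $\rho(H)=\max_i\rho(H_i)$ recalled above (from \cite{Qi2014}), to obtain the variational identity
\[
\rho(H)=\max\Big\{\,r\sum_{e\in E(H)}\prod_{v\in e}x_v \;:\; x\ge 0,\ \sum_{i=1}^{n}x_i^{\,r}=1\,\Big\},
\]
and I would note that, since the objective is homogeneous of degree $r$ and nonnegative on the nonnegative orthant, the constraint may be relaxed to $\sum_i x_i^{\,r}\le 1$ without changing the maximum.

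With this in hand the argument is short. Let $x\ge 0$ be a unit ($\|x\|_r=1$) maximizer for $H_1\cup H_2$; it is indexed by $V(H_1)\cup V(H_2)$. Since $E(H_1\cup H_2)=E(H_1)\cup E(H_2)$ and every monomial $\prod_{v\in e}x_v$ is nonnegative, I would split
\[
\rho(H_1\cup H_2)=r\!\!\sum_{e\in E(H_1\cup H_2)}\!\!\prod_{v\in e}x_v
\;\le\; r\!\!\sum_{e\in E(H_1)}\!\!\prod_{v\in e}x_v \;+\; r\!\!\sum_{e\in E(H_2)}\!\!\prod_{v\in e}x_v .
\]
For $j\in\{1,2\}$, let $x^{(j)}$ be the restriction of $x$ to the coordinates in $V(H_j)$; then $\|x^{(j)}\|_r\le\|x\|_r=1$, and applying the relaxed variational characterization to $H_j$ gives $r\sum_{e\in E(H_j)}\prod_{v\in e}x_v=r\sum_{e\in E(H_j)}\prod_{v\in e}x^{(j)}_v\le\rho(H_j)$. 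Summing the two estimates yields $\rho(H_1\cup H_2)\le\rho(H_1)+\rho(H_2)$, as desired.

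The only genuine obstacle is justifying the variational formula for $\rho(H)$ when $H$ is disconnected. For connected $H$ this is exactly the Perron--Frobenius statement for the weakly irreducible nonnegative symmetric tensor $\mathcal{A}(H)$. For the general case one writes $\rho(H)=\max_i\rho(H_i)$ and observes that the objective polynomial is separable over the components, so among nonnegative vectors on the $\ell_r$-ball it is maximized by placing all the mass on a single component of largest spectral radius; this recovers the displayed identity. Everything after that is the elementary nonnegativity-and-restriction bookkeeping above, with no delicate estimates required.
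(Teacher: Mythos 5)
The paper does not prove this lemma at all---it is quoted directly from \cite{Nikiforov}---so there is no in-paper argument to compare against; your proof is correct and is essentially the standard one from the cited source, where $\rho(H)$ is characterized as $\max\{r\sum_{e\in E(H)}\prod_{v\in e}x_v : x\geq 0,\ \|x\|_r=1\}$ and subadditivity over the edge sets follows by splitting the polynomial form and restricting the maximizer. One small simplification: the variational identity you spend the last paragraph justifying holds for \emph{every} symmetric nonnegative tensor, with no weak irreducibility or connectedness hypothesis (this is exactly \cite[Theorem~2]{Qi2013}, which the paper already invokes in its Remark~3.1 for the inequality $\rho(H)\geq x^{\mathrm{T}}(\mathcal{A}x)$), so the component-by-component reduction for disconnected $H$ is unnecessary.
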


\section{Irregularity of uniform hypergraphs}
In this section, we shall prove \autoref{thm:Main result-1} and \autoref{thm:Main result-2}.
Before continuing, we present an upper bound for the spectral radius of an $r$-uniform
hypergraph, which generalizes a result in \cite{Berman2001}. It is noted that
the same result has been proved by Nikiforov \cite{Nikiforov2017}.
Here we add a characterization for the equality.

\begin{lemma}[\cite{Nikiforov2017}]\label{lem:r-partite}
Suppose that $H$ is a connected $r$-uniform hypergraph on $n$ vertices. Then
\begin{equation}
\label{eq:rho(H)<sqrt[r]}
\rho(H)\leq\max_{\{i_1,i_2,\ldots,i_r\}\in E(H)}
\left\{\sqrt[r]{d_{i_1}d_{i_2}\cdots d_{i_r}}\right\},
\end{equation}
with equality holds if and only if $d_{i_1}d_{i_2}\cdots d_{i_r}$ is a constant
for any $\{i_1,i_2,\ldots,i_r\}\in E(H)$.
\end{lemma}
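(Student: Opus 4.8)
The plan is to bound the spectral radius via the Perron-type eigenvector together with a diagonal similarity transformation, following the spirit of the classical graph argument of Berman--Zhang. First I would reduce to the connected case — which is exactly the hypothesis — and recall that since $H$ is connected, $\mathcal{A}(H)$ is weakly irreducible and nonnegative, so by Perron--Frobenius theory for weakly irreducible nonnegative tensors there is a positive eigenvector $x=(x_1,\ldots,x_n)^{\mathrm{T}}$ with $\mathcal{A}(H)x=\rho(H)x^{[r-1]}$. The idea is then to choose the diagonal matrix $P=\mathrm{diag}\{p_1,\ldots,p_n\}$ with $p_i=d_i^{1/r}$ (or a suitable normalization thereof), form $\mathcal{B}=P^{-(r-1)}\mathcal{A}(H)P$, and use \autoref{lem:Same spectra} together with \eqref{eq:PAP} to see that $\rho(H)=\rho(\mathcal{B})$. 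By \autoref{lem:rho(A) upper bound} applied to the nonnegative tensor $\mathcal{B}$, we get $\rho(H)=\rho(\mathcal{B})\le\max_{i} r_i(\mathcal{B})$.

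The main computation is then to evaluate $r_i(\mathcal{B})$. Using \eqref{eq:PAP}, for each $i$ we have
\[
r_i(\mathcal{B})=\sum_{i_2,\ldots,i_r}p_i^{-(r-1)}a_{ii_2\cdots i_r}p_{i_2}\cdots p_{i_r}
=\frac{1}{(r-1)!}\sum_{\{i,i_2,\ldots,i_r\}\in E(H)}\frac{p_{i_2}\cdots p_{i_r}}{p_i^{r-1}},
\]
where the inner sum ranges over ordered tuples. Substituting $p_j=d_j^{1/r}$, a single edge $e=\{i,i_2,\ldots,i_r\}\ni i$ contributes $(d_{i_2}\cdots d_{i_r})^{1/r}/d_i^{(r-1)/r}=\big(d_{i}d_{i_2}\cdots d_{i_r}\big)^{1/r}/d_i=\sqrt[r]{\prod_{j\in e}d_j}\big/d_i$; since there are $(r-1)!$ orderings of the remaining $r-1$ coordinates, the factorial cancels and we obtain
\[
r_i(\mathcal{B})=\frac{1}{d_i}\sum_{e\ni i}\sqrt[r]{\textstyle\prod_{j\in e}d_j}
\le\frac{1}{d_i}\cdot d_i\cdot\max_{e\ni i}\sqrt[r]{\textstyle\prod_{j\in e}d_j}
\le\max_{e\in E(H)}\sqrt[r]{\textstyle\prod_{j\in e}d_j},
\]
using that $i$ lies in exactly $d_i$ edges. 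Taking the maximum over $i$ yields \eqref{eq:rho(H)<sqrt[r]}.

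For the equality characterization, I would trace back through the two inequalities used. The bound $\rho(\mathcal{B})\le\max_i r_i(\mathcal{B})$ is an equality, by the weakly irreducible case of \autoref{lem:rho(A) upper bound} (note $\mathcal{B}$ is weakly irreducible iff $\mathcal{A}(H)$ is, as the zero pattern is unchanged by a positive diagonal scaling), precisely when all $r_i(\mathcal{B})$ are equal, say to a common value $c$; and the per-vertex estimate $r_i(\mathcal{B})=\frac1{d_i}\sum_{e\ni i}\sqrt[r]{\prod_{j\in e}d_j}\le\max_{e\ni i}\sqrt[r]{\prod_{j\in e}d_j}$ is an equality exactly when $\sqrt[r]{\prod_{j\in e}d_j}$ takes the same value over all edges $e$ containing $i$. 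Combining, equality in \eqref{eq:rho(H)<sqrt[r]} forces $\prod_{j\in e}d_j$ to be constant over all $e\ni i$ for every $i$, and since $H$ is connected one propagates this along a path of edges to conclude $\prod_{j\in e}d_j$ is a global constant; conversely, if $\prod_{j\in e}d_j\equiv K$ then every $r_i(\mathcal{B})=K^{1/r}$, so $\rho(H)=K^{1/r}=\max_e\sqrt[r]{\prod_{j\in e}d_j}$. The one point needing a little care — the main (small) obstacle — is justifying the existence of the positive eigenvector and the weakly-irreducible equality clause of \autoref{lem:rho(A) upper bound}; both are standard for connected hypergraphs, the former via the Perron--Frobenius theorem for weakly irreducible nonnegative tensors, and the latter is quoted directly from \autoref{lem:rho(A) upper bound}.
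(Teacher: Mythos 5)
Your proposal is correct and follows essentially the same route as the paper: the diagonal similarity $P=\mathrm{diag}\{d_1^{1/r},\ldots,d_n^{1/r}\}$, the row-sum bound of \autoref{lem:rho(A) upper bound} combined with \autoref{lem:Same spectra}, and the equality analysis via weak irreducibility of $P^{-(r-1)}\mathcal{A}(H)P$. The only (harmless) differences are that you invoke a Perron eigenvector that the argument never actually needs, and your connectivity-propagation step in the equality case is redundant, since all $r_i$ being equal to the global maximum already forces every edge to attain it.
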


\begin{proof}
Let $P=\text{diag}\,\{p_1,p_2,\ldots,p_n\}$ be a nonsingular diagonal matrix.
By \eqref{eq:PAP} we have
\begin{align*}
r_i(P^{-(r-1)}\mathcal{A}(H)P) & =
\sum_{i_2,\ldots,i_r=1}^np_i^{-(r-1)}a_{ii_2\cdots i_r}p_{i_2}\cdots p_{i_r}\\
& =\sum_{\{i,i_2,\ldots,i_r\}\in E(H)}p_i^{-(r-1)}p_{i_2}\cdots p_{i_r}.
\end{align*}
Setting $p_i=\sqrt[r]{d_i}$, $i\in [n]$, we see
\begin{align}
r_i(P^{-(r-1)}\mathcal{A}(H)P) & =
\sum_{\{i,i_2,\ldots,i_r\}\in E(H)}\frac{\sqrt[r]{d_id_{i_2}\cdots d_{i_r}}}{d_i} \label{eq:r_i}\\
& \leq\max_{\{i_1,i_2,\ldots,i_r\}\in E(H)}\left\{\sqrt[r]{d_{i_1}d_{i_2}\cdots d_{i_r}}\right\}
\label{eq:rho(H)<max}.
\end{align}
By \autoref{lem:rho(A) upper bound} and \autoref{lem:Same spectra}, we deduce that
\begin{equation}
\label{eq:rho(H)<max1}
\rho(H)=\rho(P^{-(r-1)}\mathcal{A}(H)P) \leq
\max_{1\leq i\leq n}\left\{r_i(P^{-(r-1)}\mathcal{A}(H)P)\right\}.
\end{equation}
Then \eqref{eq:rho(H)<max} and \eqref{eq:rho(H)<max1} imply that
\[
\rho(H)\leq\max_{\{i_1,i_2,\ldots,i_r\}\in E(H)}\left\{\sqrt[r]{d_{i_1}d_{i_2}\cdots d_{i_r}}\right\}.
\]

If the equality in \eqref{eq:rho(H)<sqrt[r]} holds, then the equality in \eqref{eq:rho(H)<max1}
holds. Since $H$ is connected, $\mathcal{A}(H)$ is weakly irreducible. Therefore,
$P^{-(r-1)}\mathcal{A}(H)P$ is also weakly irreducible. By \autoref{lem:rho(A) upper bound},
$r_i(P^{-(r-1)}\mathcal{A}(H)P)$ is a constant, $i\in [n]$. Furthermore, the equality
in \eqref{eq:rho(H)<max} holds. So, $d_{i_1}d_{i_2}\cdots d_{i_r}\equiv c$ is a
constant for any $\{i_1,i_2,\ldots,i_r\}\in E(H)$. Conversely, assume that for any
$\{i_1,i_2,\ldots,i_r\}\in E(H)$, $d_{i_1}d_{i_2}\cdots d_{i_r}\equiv c$ is a constant.
It follows from \autoref{lem:rho(A) upper bound} and \eqref{eq:r_i} that
\[
\sqrt[r]{c}=\min_{1\leq i\leq n}\left\{r_i(P^{-(r-1)}\mathcal{A}(H)P)\right\}
\leq\rho(H)\leq\max_{1\leq i\leq n}
\left\{r_i(P^{-(r-1)}\mathcal{A}(H)P)\right\}=\sqrt[r]{c},
\]
which yields that $\rho(H)=\sqrt[r]{c}$, as desired.
\end{proof}

\begin{remark}
We now consider a similar topic as \autoref{lem:r-partite} which is of independent
interest. Suppose that $H$ is a connected $r$-uniform hypergraph on $n$ vertices. Let
$x=\frac{1}{\sqrt[r]{rm}}(\sqrt[r]{d_1},\sqrt[r]{d_2},\ldots,\sqrt[r]{d_n})^{\mathrm{T}}$
be a column vector. By \cite[Theorem 2]{Qi2013} and AM--GM inequality, we have
\begin{align*}
\rho(H) & \geq x^{\mathrm{T}}(\mathcal{A}x)=\frac{1}{m}\sum_{\{i_1,i_2,\ldots,i_r\}\in E(H)}
\sqrt[r]{d_{i_1}d_{i_2}\cdots d_{i_r}}\\
& \geq\Bigg(\prod_{\{i_1,i_2,\ldots,i_r\}\in E(H)}
\sqrt[r]{d_{i_1}d_{i_2}\cdots d_{i_r}}\Bigg)^{\frac{1}{m}}.
\end{align*}
It follows that
\[
\rho(H)^r\geq\Bigg(\prod_{\{i_1,i_2,\ldots,i_r\}\in E(H)}d_{i_1}d_{i_2}\cdots d_{i_r}\Bigg)^{\frac{1}{m}}.
\]
From the inequality between geometric and harmonic means, we obtain
\begin{equation}
\label{eq:GM-HM}
\rho(H)^r\geq\frac{m}{\displaystyle\sum_{\{i_1,i_2,\ldots,i_r\}\in E(H)}\frac{1}{d_{i_1}d_{i_2}\cdots d_{i_r}}}.
\end{equation}
Clearly, equality in \eqref{eq:GM-HM} holds if and only if $d_{i_1}d_{i_2}\cdots d_{i_r}$
is a constant for any $\{i_1,i_2,\ldots,i_r\}\in E(H)$. The above inequality generalize a
result in \cite{HoffmanWolfe1995} (see also \cite{SimicStevanovic2003}).
\end{remark}

The following lemma is needed, and the arguments have been used in
\cite{Nikiforov:Degree deviation}.
\begin{lemma}\label{lem:Regular}
Let $H\in\mathcal{H}(n,m)$ be an $r$-uniform hypergraph. Then there exists an
$r$-uniform hypergraph $\widehat{H}\in\mathcal{H}(n,m)$ such that
$\Delta(\widehat{H})-\delta(\widehat{H})\leq 1$
and $\widehat{H}$ differs from $H$ in at most $s(H)$ edges.
\end{lemma}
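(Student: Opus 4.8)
The plan is to produce $\widehat H$ from $H$ by a finite sequence of edge swaps, each of which strictly decreases the irregularity while not increasing the total number of edges moved beyond $s(H)$. Concretely, I would argue by induction on $s(H)$ (or, equivalently, on $\Delta(H)-\delta(H)$). If $\Delta(H)-\delta(H)\le 1$ already, take $\widehat H=H$ and there is nothing to do. Otherwise pick a vertex $u$ with $d_u=\Delta(H)$ and a vertex $w$ with $d_w=\delta(H)$, so that $d_u-d_w\ge 2$. Since $d_u>d_w$, there must exist an edge $e\in E(H)$ with $u\in e$ and $w\notin e$ (a counting argument: if every edge through $u$ also contained $w$ then $d_w\ge d_u$, contradiction; one may also need $e$ to be such that $e-\{u\}+\{w\}\notin E(H)$, which can be arranged because $d_u-d_w\ge2$ gives enough ``room'' among the edges at $u$ not meeting $w$).

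Next I would perform the swap: replace $e$ by $e'=(e\setminus\{u\})\cup\{w\}$, obtaining a new $r$-uniform hypergraph $H'\in\mathcal H(n,m)$. This changes only the degrees of $u$ (down by $1$) and $w$ (up by $1$), and all other degrees are unchanged. The key observation is that this operation decreases $s(\cdot)$: since $d_u>rm/n\ge$ average $\ge d_w$ is not quite guaranteed, one should instead directly check that $\sum_i|d_i-rm/n|$ drops by exactly $2$ when $d_u>rm/n$ and $d_w<rm/n$, and drops by $0$ (or we must be more careful) in the borderline cases — here the cleaner bookkeeping is to track $D(H):=\Delta(H)-\delta(H)$ together with the number of vertices attaining the extremes, and observe $|d_u-d_w|$ strictly decreases; after finitely many swaps $D$ reaches $\le 1$. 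The total number of edges in which $\widehat H$ differs from $H$ is at most the number of swaps performed, and each swap moves one edge, so it suffices to bound the number of swaps by $s(H)$. That bound follows because each swap reduces $\sum_i|d_i-rm/n|$ by at least $1$ (in fact by $2$ whenever both $d_u>rm/n>d_w$; and when the average $rm/n$ is not an integer there is always at least one extreme degree strictly on the ``wrong'' side, so a reduction of at least $1$ can be guaranteed by choosing $u,w$ appropriately), and this sum is nonnegative and starts at $s(H)$.

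I expect the main obstacle to be the careful case analysis around non-integral average degree $rm/n$ and the need to guarantee that a \emph{valid} swap edge $e$ exists — i.e. one with $u\in e$, $w\notin e$, and $e-\{u\}+\{w\}$ not already an edge — so that $H'$ remains a genuine ($r$-uniform, simple) hypergraph in $\mathcal H(n,m)$. One must check there are enough edges incident to $u$ avoiding $w$: the number of edges containing both $u$ and $w$ is at most $d_w$, so $u$ lies in at least $d_u-d_w\ge 2$ edges missing $w$, and at most one of these can coincide with an existing edge after the swap only if... — here a short argument shows at least one admissible $e$ exists. The other delicate point is matching the edge-difference count to $s(H)$ exactly rather than to something like $s(H)/2$ or a constant multiple; this is handled by the observation in \cite{Nikiforov:Degree deviation} that each unit of $s(H)$ ``pays for'' one moved edge, since moving an edge transfers one unit of degree from an above-average vertex to a below-average vertex and thereby reduces $s(H)$ by the right amount. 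Once these points are settled, the induction closes and $\widehat H$ has the stated properties.
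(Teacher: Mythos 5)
Your overall strategy (repeatedly shifting an edge from a maximum-degree vertex to a minimum-degree vertex, with the injectivity argument guaranteeing an admissible edge $e$ with $e'=(e\setminus\{u\})\cup\{w\}\notin E(H)$) is the right starting point and matches the paper's. But the quantitative bookkeeping, which is the entire content of the lemma, does not close. First, under the paper's convention a single swap $H\mapsto H-e+e'$ makes the new hypergraph differ from the old one in \emph{two} edges (one deleted, one added), not one; this factor of $2$ is used essentially later, where the bound ``differs in at most $s_r(H)$ edges'' is converted to $|E(H)\setminus E(\widehat H)|\le s_r(H)/2$. So to reach the stated bound you must charge each swap \emph{two} units of decrease of $s$, not one. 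Second, your fallback claim that each swap decreases $s$ by at least $1$ is false. Write $rm/n=d+\epsilon$ with $d=\lfloor rm/n\rfloor$. If $d_u=d+2$ and $d_w=d$, the swap changes $u$'s contribution from $2-\epsilon$ to $1-\epsilon$ (a drop of $1$) but changes $w$'s from $\epsilon$ to $1-\epsilon$ (an \emph{increase} of $1-2\epsilon$), so the net decrease of $s$ is only $2\epsilon$, which can be as small as $2/n$. A per-swap charging argument therefore gives only a bound of order $n\cdot s(H)$ in the worst case, not $s(H)$.

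The paper resolves exactly this difficulty with a two-stage argument that your proposal is missing. In the first stage one swaps only while $\delta\le d-1$ \emph{and} $\Delta\ge d+2$ hold simultaneously; then both endpoints are strictly on the wrong side of $rm/n$ by enough that $s$ drops by exactly $2$ per swap, so this stage costs $s(H)-s(H^*)$ edge changes. Once one of the two conditions fails (say $\delta(H^*)=d$), the remaining repairs are bounded \emph{globally} rather than per swap: the number of further swaps is at most $\ell=\sum_{u:\,d_{H^*}(u)\ge d+2}\bigl(d_{H^*}(u)-d-1\bigr)$, costing at most $2\ell$ edge changes, and a direct degree-sum computation shows $s(H^*)=2k\bigl(1-\tfrac{k-\ell}{n}\bigr)+2\ell>2\ell$, where $k$ is the number of vertices of degree $d$. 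Adding the two stages gives $s(H)-s(H^*)+2\ell\le s(H)$. Without this (or an equivalent global counting argument), the induction you describe does not yield the claimed bound of $s(H)$ differing edges.
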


\begin{proof}
Denote $d=\left\lfloor rm/n\right\rfloor$ for short. We first show that for
any $H\in\mathcal{H}(n,m)$, there exists $H^*\in\mathcal{H}(n,m)$ such that
either $\delta(H^*)=d$ or $\Delta(H^*)=d+1$. If $\delta(H)\leq d-1$ and
$\Delta(H)\geq d+2$, then we select $i$, $j\in V(H)$ such that $d_i=\delta(H)$
and $d_j=\Delta(H)$. Since $d_j>d_i$, there exists an edge $e\in E(H)$ such
that $j\in e$, $i\notin e$ and $e':=(e\backslash\{j\})\cup\{i\}\notin E(H)$.
Denote $H':=H-e+e'$. Clearly, $H'\in\mathcal{H}(n,m)$ and $H'$ differs from
$H$ in two edges. Moreover, we have
\begin{align*}
s(H){-}s(H') & {=} \left|d_i{-}\frac{rm}{n}\right|{+}\left|d_j{-}\frac{rm}{n}\right|
{-}\left|(d_i{+}1){-}\frac{rm}{n}\right|{-}\left|(d_j{-}1){-}\frac{rm}{n}\right|\\
& {=} \left(\frac{rm}{n}{-}d_i\right)\!\!+\!\!\left(d_j{-}\frac{rm}{n}\right)
\!\!-\!\!\left(\frac{rm}{n}{-}(d_i{+}1)\right)\!\!-\!\!\left((d_j{-}1){-}\frac{rm}{n}\right)\!=\!2.
\end{align*}
Repeating the above process, we can get an $r$-uniform hypergraph $H^*\in\mathcal{H}(n,m)$
such that either $\delta(H^*)=d$ or $\Delta(H^*)=d+1$, and $H^*$ differs from $H$ in
$(s(H)-s(H^*))$ edges.

Without loss of generality, we may assume that $\delta(H^*)=d$ (the other case can be
proved similarly). If $\Delta(H^*)\leq d+1$, then $\widehat{H}=H^*$ is the desired
hypergraph. Otherwise, assume that $\Delta(H^*)\geq d+2$. Denote
\begin{align*}
A & =\{i\in V(H^*)\,|\,d_{H^*}(i)=d\}, \\
B & =\{i\in V(H^*)\,|\,d_{H^*}(i)=d+1\},\\
C & =\{i\in V(H^*)\,|\,d_{H^*}(i)\geq d+2\},
\end{align*}
and $|A|=k$, $|B|=s$. Let $i\in A$, $j\in C$ with $d_{H^*}(j)=\Delta(H^*)$. Notice that
$d_{H^*}(j)>d$, then there is an edge $e\in E(H^*)$ such that $j\in e$, $i\notin e$ and
$e'':=(e\backslash\{j\})\cup\{i\}\notin E(H^*)$. Let $H'':=H^*-e+e''$. Then
$H''\in\mathcal{H}(n,m)$ and $H''$ differs from $H^*$ in two edges. Repeating the process at most
$\ell:=\sum_{u\in C}(d_{H^*}(u)-d-1)$ times, we can obtain the desired $r$-uniform
hypergraph $\widehat{H}\in\mathcal{H}(n,m)$. Therefore, $\widehat{H}$ differs $H$ at most
$(s(H)-s(H^*)+2\ell)$ edges.

In the following, we will show that $s(H)-s(H^*)+2\ell\leq s(H)$. Consider the $r$-uniform
hypergraph $H^*$, we have
\begin{align*}
\frac{rm}{n} & =\frac{1}{n}
\Bigg(\sum_{i\in A}d_{H^*}(i)+\sum_{i\in B}d_{H^*}(i)+\sum_{i\in C}d_{H^*}(i)\Bigg)\\
& =\frac1n\Bigg(kd+s(d+1)+\sum_{i\in C}d_{H^*}(i)\Bigg)\\
& =\frac1n[kd+s(d+1)+(n-k-s)(d+1)+\ell]\\
& =d+1+\frac{\ell-k}{n}.
\end{align*}
Recall that $d=\lfloor rm/n\rfloor$. Hence $\ell<k$. Furthermore,
\begin{align*}
s(H^*) & =\sum_{i\in A}\left|d_{H^*}(i)-\frac{rm}{n}\right|
+\sum_{i\in B}\left|d_{H^*}(i)-\frac{rm}{n}\right|
+\sum_{i\in C}\left|d_{H^*}(i)-\frac{rm}{n}\right|\\
& =k\left(\frac{rm}{n}-d\right)+s\left(d+1-\frac{rm}{n}\right)+
\sum_{i\in C}\left(d_{H^*}(i)-\frac{rm}{n}\right)\\
& =k\left(1+\frac{\ell-k}{n}\right)+s\cdot\frac{k-\ell}{n}+\ell+
(n-k-s)\cdot\frac{k-\ell}{n}\\
& =2k\left(1-\frac{k-\ell}{n}\right)+2\ell>2\ell,
\end{align*}
then the result follows.
\end{proof}

By applying \autoref{lem:Regular} to each vertex class of an $r$-partite $r$-uniform
hypergraph, we can obtain the following corollary.

\begin{corollary}\label{coro:Regular}
Let $H\in\mathcal{H}(n,m)$ be an $r$-partite $r$-uniform hypergraph. Then
there exists an $r$-partite $r$-uniform hypergraph $\widehat{H}$ such that
$|d_{\widehat{H}}(i)-d_{\widehat{H}}(j)|\leq 1$ for any $i$, $j$ belonging
to the same vertex class and $\widehat{H}$ differs from $H$ in at most $s_r(H)$
edges.
\end{corollary}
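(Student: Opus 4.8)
The plan is to apply Lemma \ref{lem:Regular} to the hypergraph, but restricted to a single vertex class at a time, treating the other $r-1$ vertex classes as fixed. Let $V(H)=V_1\cupdot V_2\cupdot\cdots\cupdot V_r$ with $|V_i|=n_i$. The key observation is that the whole argument in the proof of Lemma \ref{lem:Regular} — picking $i$ with minimum degree, $j$ with maximum degree, finding an edge $e\ni j$ with $i\notin e$ such that $e'=(e\setminus\{j\})\cup\{i\}$ is not already an edge, and swapping $e$ for $e'$ — goes through verbatim provided $i$ and $j$ lie in the \emph{same} vertex class $V_t$: then $e'$ still meets each class in at most one vertex, so $H-e+e'$ is again $r$-partite with the same parts, and it still has $n$ vertices and $m$ edges. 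Moreover the degrees of all vertices outside $V_t$ are unchanged by such a swap, and within $V_t$ only $d_i$ and $d_j$ change (by $+1$ and $-1$). Hence the quantity $s^{(t)}(H):=\sum_{j\in V_t}|d_j-m/n_t|$ is, for this purpose, exactly the analogue of $s(H)$ in Lemma \ref{lem:Regular} with $n$ replaced by $n_t$ and $rm$ replaced by $m$ (since $\sum_{j\in V_t}d_j=m$, so the average degree inside $V_t$ is $m/n_t$).

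The steps I would carry out are: (1) Fix a class $V_t$. Run the procedure of Lemma \ref{lem:Regular} using only swaps internal to $V_t$; the bookkeeping in that proof — the reduction to $\delta=d$ or $\Delta=d+1$ with $d=\lfloor m/n_t\rfloor$, then the second stage flattening the set $C$ of high-degree vertices, and the counting inequality $s(H)-s(H^*)+2\ell\le s(H)$ — is formally identical after the substitution $n\mapsto n_t$, $rm\mapsto m$. This yields a hypergraph in which all degrees within $V_t$ differ by at most $1$, at the cost of changing at most $s^{(t)}(H)$ edges, and crucially leaving the degrees of all vertices \emph{outside} $V_t$ untouched. (2) Iterate over $t=1,2,\dots,r$. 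Because step $t$ does not disturb the degree sequence inside the classes already processed (swaps at stage $t$ only move vertices of $V_t$ in and out of edges), the property "$|d(i)-d(j)|\le1$ for $i,j$ in the same class" accumulates and is retained for all processed classes. (3) Add up the edge-change counts: after all $r$ stages the final $\widehat H$ differs from $H$ in at most $\sum_{t=1}^r s^{(t)}(H)=s_r(H)$ edges, and $\widehat H$ is $r$-partite with the same parts, $n$ vertices, and $m$ edges.

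The one point that needs a sentence of care — and which I expect to be the only real obstacle — is verifying that processing class $V_t$ genuinely leaves the degree sequences of the earlier classes $V_1,\dots,V_{t-1}$ invariant; this is where one must check that every swap used in Lemma \ref{lem:Regular}, when carried out with $i,j\in V_t$, alters only $d_i$ and $d_j$. That is immediate: $e'=(e\setminus\{j\})\cup\{i\}$ and $e$ agree outside $\{i,j\}$, so for any vertex $w\notin\{i,j\}$ membership in $e$ equals membership in $e'$. Once this is noted, the corollary follows by $r$ applications of Lemma \ref{lem:Regular} exactly as indicated, with no further computation.
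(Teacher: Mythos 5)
Your proof is correct and is exactly the argument the paper intends: the paper offers only the one-line remark that the corollary follows ``by applying \autoref{lem:Regular} to each vertex class,'' and your writeup supplies precisely the details needed to justify that remark (swaps with $i,j$ in the same class preserve $r$-partiteness, alter only the degrees of the two swapped vertices so the classes can be processed independently, and the per-class edge-change costs sum to $s_r(H)$). Nothing further is needed.
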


In the sequel, we shall prove \autoref{thm:Main result-1}.
For this purpose, we need the following concept.
Let $H$ be an $r$-uniform hypergraph on $n$ vertices and $k_1$, $k_2$, $\ldots$,
$k_n$ be positive integers. Denote $H(k_1,k_2,\ldots,k_n)$ for the $r$-uniform
hypergraph obtained by replacing each vertex $i\in V(H)$ with a set $U_i$
of size $k_i$ and each edge $\{i_1,i_2,\ldots,i_r\}\in E(H)$ with a complete
$r$-partite $r$-uniform hypergraph with vertex classes $U_{i_1}$, $U_{i_1}$,
$\ldots$, $U_{i_r}$. The hypergraph $H(k_1,k_2,\ldots,k_n)$ is called a
{\em blow-up} of $H$.

\begin{lemma}[\cite{Nikiforov}]
\label{lem:Blown-up}
Let $H$ be an $r$-uniform hypergraph on $n$ vertices. Then
\[
\rho(H(k,k,\ldots,k))=k^{r-1}\rho(H).
\]
\end{lemma}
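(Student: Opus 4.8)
The plan is to exhibit an explicit eigenvector of $\mathcal{A}(H(k,k,\ldots,k))$ built from a Perron eigenvector of $\mathcal{A}(H)$, and separately to produce a diagonal similarity that reduces the general question to counting multiplicities. The cleanest route is via the general product and \autoref{lem:Same spectra}. Write $G=H(k,k,\ldots,k)$; its vertex set is $V(H)\times[k]$, and $\{(i_1,j_1),\ldots,(i_r,j_r)\}\in E(G)$ exactly when $\{i_1,\ldots,i_r\}\in E(H)$ and the $i_t$ are pairwise distinct (so $r$ distinct blow-up classes, each contributing one vertex, with the $j_t$ arbitrary). First I would let $x=(x_1,\ldots,x_n)^{\mathrm{T}}$ be a nonnegative eigenvector of $\mathcal{A}(H)$ for $\rho(H)$, i.e.\ $(\mathcal{A}(H)x)_i=\rho(H)\,x_i^{\,r-1}$ for all $i$, and define $y\in\mathbb{R}^{V(H)\times[k]}$ by $y_{(i,j)}=x_i$ for every $j\in[k]$ (the vector that is constant on each blow-up class).

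The main computation is then to evaluate $(\mathcal{A}(G)y)_{(i,j)}$ using \eqref{eq:Ax equation}. Each edge of $G$ through $(i,j)$ corresponds to a choice of an edge $\{i,i_2,\ldots,i_r\}\in E(H)$ together with an independent choice of $j_2,\ldots,j_r\in[k]$ for the other $r-1$ coordinates; since $y$ is constant on classes, the sum over $j_2,\ldots,j_r$ just contributes a factor $k^{r-1}$, giving
\[
(\mathcal{A}(G)y)_{(i,j)} = k^{r-1}\!\!\!\sum_{\{i,i_2,\ldots,i_r\}\in E(H)}\!\!\! \frac{x_{i_2}\cdots x_{i_r}}{(r-1)!}\cdot\frac{(r-1)!\,\cdot\,1}{(r-1)!}
= k^{r-1}(\mathcal{A}(H)x)_i = k^{r-1}\rho(H)\,x_i^{\,r-1}=k^{r-1}\rho(H)\,y_{(i,j)}^{\,r-1},
\]
so $k^{r-1}\rho(H)$ is an eigenvalue of $\mathcal{A}(G)$ with a nonnegative eigenvector; by the Perron--Frobenius theory for nonnegative tensors this forces $\rho(G)\ge k^{r-1}\rho(H)$. (One should take a little care with the normalization of the adjacency-tensor entries and with the bookkeeping of the $(r-1)!$ orderings, but it is routine.)

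For the reverse inequality $\rho(G)\le k^{r-1}\rho(H)$, the plan is to run the same $r_i$-bound argument as in \autoref{lem:r-partite}: choose the diagonal matrix $P$ on $V(G)$ with $p_{(i,j)}=x_i$ (assuming $H$ connected, so $x>0$; the general case follows by passing to components via $\rho(H)=\max_i\rho(H_i)$ and the obvious fact that blowing up respects components). Then by \eqref{eq:PAP},
\[
r_{(i,j)}\!\left(P^{-(r-1)}\mathcal{A}(G)P\right)=\sum_{\{(i,j),(i_2,j_2),\ldots\}\in E(G)} x_i^{-(r-1)}x_{i_2}\cdots x_{i_r}=k^{r-1}\,x_i^{-(r-1)}(\mathcal{A}(H)x)_i=k^{r-1}\rho(H),
\]
which is constant in $(i,j)$, so \autoref{lem:rho(A) upper bound} together with \autoref{lem:Same spectra} gives $\rho(G)=k^{r-1}\rho(H)$ outright. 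The one genuine obstacle is the disconnected case: there $x$ may have zero entries and $P$ is singular, so I would instead note that $G$'s components are exactly the $k$-blow-ups of $H$'s components, apply the connected case componentwise, and invoke $\rho(H)=\max_{1\le i\le s}\rho(H_i)$ (and likewise for $G$) to conclude. This sidesteps the need for a full Perron argument on weakly reducible tensors.
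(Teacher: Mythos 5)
Your argument is essentially correct, but note that the paper does not prove this lemma at all: it is imported from Nikiforov's \emph{Analytic methods for uniform hypergraphs} with a bare citation, so you are supplying a self-contained proof where the paper has none. What your route buys is that it uses only tools already in the paper: with $p_{(i,j)}=x_i$, every row sum of $P^{-(r-1)}\mathcal{A}(G)P$ equals $k^{r-1}\rho(H)$, so \autoref{lem:rho(A) upper bound} together with \autoref{lem:Same spectra} pins $\rho(G)$ between equal lower and upper bounds and gives the equality outright for connected $H$. This makes your first half (exhibiting the eigenvector $y$) redundant in the connected case, though it is harmless; and for the lower bound you do not need Perron--Frobenius at all, since any eigenvalue has modulus at most the spectral radius by definition. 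Two points to tighten. First, the displayed intermediate expression for $(\mathcal{A}(G)y)_{(i,j)}$ is off by a factor of $(r-1)!$ as written: each unordered edge of $G$ through $(i,j)$ contributes $(r-1)!$ ordered index tuples, each carrying the weight $\tfrac{1}{(r-1)!}$, so the per-edge contribution is exactly $x_{i_2}\cdots x_{i_r}$ and the endpoints of your chain of equalities are correct even though the middle term is not. Second, the positivity of $x$ for connected $H$ (needed so that $P$ is nonsingular in \autoref{lem:Same spectra}) is the Perron--Frobenius theorem for weakly irreducible nonnegative tensors and should be cited explicitly (Friedland--Gaubert--Han, or Yang--Yang, both already in the bibliography), using the fact quoted in Section~\ref{sec2} that connectivity of $H$ is equivalent to weak irreducibility of $\mathcal{A}(H)$. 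Your componentwise reduction for disconnected $H$ is sound, with the minor caveat that the blow-up of an isolated vertex is $k$ isolated vertices rather than a single component; this does not affect the maximum of the component spectral radii.
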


\noindent{\bfseries Proof of \autoref{thm:Main result-1}.}
By \autoref{coro:Regular}, there exists an $r$-partite $r$-uniform hypergraph
$\widehat{H}\in\mathcal{H}(n,m)$ such that $|d_{\widehat{H}}(i)-d_{\widehat{H}}(j)|\leq 1$
for any $i$, $j$ belonging to the same vertex class and $\widehat{H}$ differs
from $H$ in at most $s_r(H)$ edges. Therefore $2|E(H)\backslash E(\widehat{H})|\leq s_r(H)$.
We need the following two claims.
\begin{claim}
\label{claim1}
$\displaystyle\sqrt[r]{n_1n_2\cdots n_r}\geq\sqrt[r]{n/r}$.
\end{claim}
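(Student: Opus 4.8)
The plan is to reduce \autoref{claim1} to two entirely elementary facts: the product of positive integers is never smaller than any single one of its factors, and the largest of $n_1,\dots,n_r$ is at least their average. First I would relabel the vertex classes so that $n_1\le n_2\le\cdots\le n_r$. Since $H$ is $r$-partite with every edge meeting each class in exactly one vertex, each class is nonempty and so every $n_i$ is a positive integer; in particular $n_1 n_2\cdots n_{r-1}\ge 1$, whence
\[
n_1 n_2\cdots n_r\ge n_r .
\]
Next, because $n_r=\max_{1\le i\le r}n_i$ and $n=n_1+n_2+\cdots+n_r$, the maximum dominates the arithmetic mean, i.e.\ $n_r\ge n/r$. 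Chaining the two bounds gives $n_1 n_2\cdots n_r\ge n/r$, and extracting $r$-th roots yields $\sqrt[r]{n_1 n_2\cdots n_r}\ge\sqrt[r]{n/r}$, which is exactly the assertion of the claim.

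If one wishes to avoid the relabeling, the same reasoning can be written in one line: for any index $j$ attaining $n_j=\max_{1\le i\le r}n_i$ one has $\prod_{i=1}^r n_i\ge n_j\ge\frac1r\sum_{i=1}^r n_i=\frac{n}{r}$, and then take $r$-th roots. I would use whichever phrasing reads most cleanly alongside the remaining claims in the proof of \autoref{thm:Main result-1}.

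I do not anticipate any genuine obstacle here: the statement is a short inequality whose only subtlety is that the $n_i$ are integers at least $1$, which is precisely what licenses discarding all but the largest factor of $n_1n_2\cdots n_r$ without decreasing it. The real work of \autoref{thm:Main result-1} lies elsewhere (in combining this estimate with the blow-up \autoref{lem:Blown-up}, \autoref{lem:r-partite}/\autoref{lem:rho<m}, \autoref{lem:Weyl's inequality}, and \autoref{coro:Regular}), so I would keep the proof of \autoref{claim1} as terse as possible.
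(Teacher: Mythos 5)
Your proof is correct, and it takes a genuinely different and more direct route than the paper. The paper proves \autoref{claim1} by a smoothing argument: assuming $n_1\ge n_2$, it replaces the pair $(n_1,n_2)$ by $(n_1+1,n_2-1)$, which preserves the sum and strictly decreases the product, and iterates to conclude that the minimum of $n_1n_2\cdots n_r$ subject to $\sum_i n_i=n$ and $n_i\ge 1$ is attained at the configuration $(n-r+1,1,\ldots,1)$; it then checks $n-r+1\ge n/r$, which requires $n\ge r$. You instead observe in one line that $\prod_{i=1}^r n_i\ge \max_i n_i\ge \frac{1}{r}\sum_{i=1}^r n_i=n/r$, since all factors are positive integers and the maximum dominates the mean. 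Your argument avoids the iteration and the need to identify the extremal configuration, and it does not even invoke $n\ge r$; what the paper's smoothing buys is the slightly stronger intermediate bound $n_1n_2\cdots n_r\ge n-r+1$, which is sharper than $n/r$ but is not needed for \autoref{thm:Main result-1}. Your implicit use of $n_i\ge 1$ is correctly justified: since $H$ is $r$-uniform and $r$-partite, every edge meets each class exactly once, so (the paper tacitly assumes, as it must for the statement of \autoref{thm:Main result-1} to make sense) each class is nonempty.
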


\noindent{\bf Proof of Claim 1.}
Without loss of generality, we assume that $n_1-n_2\geq0$. We will replace the
pair $n_1$ and $n_2$ by $n_1'=n_1+1$ and $n_2'=n_2-1$. Notice that $n_1'$ and $n_2'$
have the same sum as $n_1$ and $n_2$ while decreasing the product. To be precise,
$n_1'n_2'=(n_1+1)(n_2-1)<n_1n_2$, and therefore $(n_1'n_2')n_3\cdots n_r<n_1n_2n_3\cdots n_r$.
Repeating this process, we know that $\sqrt[r]{n_1n_2\cdots n_r}$ attaining the minimum when
one of $n_1$, $n_2$, $\ldots$, $n_r$ is $(n-r+1)$ and the others are $1$. It follows that
$\sqrt[r]{n_1n_2\cdots n_r}\geq\sqrt[r]{n-r+1}\geq\sqrt[r]{n/r}$.
The proof of the claim is completed.

\begin{claim}
\label{claim2}
$\displaystyle\rho(\widehat{H})\leq
\frac{m}{\sqrt[r]{n_1n_2\cdots n_r}}+\left(\frac{n}{r}\right)^{1-\frac{1}{r}}$.
\end{claim}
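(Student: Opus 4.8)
The plan is to estimate $\rho(\widehat H)$ by feeding the near-regularity of $\widehat H$ into \autoref{lem:r-partite}, and then to clean up the resulting bound using \autoref{claim1}. First I would note that, since $\widehat H$ is $r$-partite $r$-uniform, every edge has exactly one vertex in each class $V_i$, so $\sum_{v\in V_i}d_{\widehat H}(v)=m$ for each $i\in[r]$; by \autoref{coro:Regular} the degrees inside $V_i$ differ by at most $1$ and hence take at most two consecutive values, and since their average is $m/n_i$ every vertex of $V_i$ has degree at most $\lceil m/n_i\rceil$.

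Next I would apply \autoref{lem:r-partite} to each connected component of $\widehat H$, using that $\rho(\widehat H)$ is the maximum of the spectral radii of its components and that a component's degrees never exceed those of $\widehat H$. For an edge $\{i_1,\dots,i_r\}$ of $\widehat H$ we may assume $i_k\in V_k$, so $d_{\widehat H}(i_1)\cdots d_{\widehat H}(i_r)\le\prod_{k=1}^r\lceil m/n_k\rceil$; hence $\rho(\widehat H)\le\sqrt[r]{\prod_{k=1}^r\lceil m/n_k\rceil}$.

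It then suffices to prove the numerical inequality $\sqrt[r]{\prod_{k=1}^r\lceil m/n_k\rceil}\le \frac{m}{\sqrt[r]{n_1n_2\cdots n_r}}+\left(\frac{n}{r}\right)^{1-\frac1r}$. Put $N=n_1n_2\cdots n_r$. From $\lceil m/n_k\rceil\le m/n_k+1=(m+n_k)/n_k$ we get $\prod_{k=1}^r\lceil m/n_k\rceil\le N^{-1}\prod_{k=1}^r(m+n_k)$, and AM--GM gives $\prod_{k=1}^r(m+n_k)\le\left(m+\tfrac1r\sum_{k=1}^r n_k\right)^r=(m+n/r)^r$. By \autoref{claim1}, $\sqrt[r]{N}\,(n/r)^{1-\frac1r}\ge(n/r)^{\frac1r}(n/r)^{1-\frac1r}=n/r$, whence $(m+n/r)^r\le\left(m+\sqrt[r]{N}\,(n/r)^{1-\frac1r}\right)^r=N\left(\frac{m}{\sqrt[r]{N}}+(n/r)^{1-\frac1r}\right)^r$. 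Dividing by $N$ and taking $r$-th roots gives exactly the asserted bound.

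The only genuinely substantive step is spotting this reduction: rewriting $\prod_k(m/n_k+1)$ as $N^{-1}\prod_k(m+n_k)$, and recognising that \autoref{claim1} is precisely the input that upgrades the easy AM--GM estimate $(m+n/r)^r$ to the needed $N\left(\frac{m}{\sqrt[r]{N}}+(n/r)^{1-\frac1r}\right)^r$. Establishing the maximum-degree bound $\le\lceil m/n_i\rceil$ in each class and reducing to connected components are routine, so I do not expect a real obstacle.
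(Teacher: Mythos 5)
Your proof is correct, and its skeleton matches the paper's: bound the maximum degree in each class by $m/n_i+1$, feed this into \autoref{lem:r-partite}, and reduce to the numerical inequality $\prod_{k=1}^r\left(\frac{m}{n_k}+1\right)\leq\left(\frac{m}{\sqrt[r]{n_1\cdots n_r}}+\left(\frac{n}{r}\right)^{1-\frac1r}\right)^r$, with \autoref{claim1} as the essential input. Where you diverge is in the proof of that inequality. The paper expands the product as $\sum_{i=0}^r e_i(n_1,\ldots,n_r)\,m^{r-i}/(n_1\cdots n_r)$ and bounds each elementary symmetric polynomial separately via Maclaurin's inequality combined with \autoref{claim1}, then reassembles by the binomial theorem. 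You instead write the product as $N^{-1}\prod_k(m+n_k)$ with $N=n_1\cdots n_r$, apply a single AM--GM to get $(m+n/r)^r$, and invoke \autoref{claim1} once to replace $n/r$ by $\sqrt[r]{N}\,(n/r)^{1-\frac1r}$; this is shorter and avoids Maclaurin entirely, at no loss of strength (both routes land on the same intermediate bound $\left(m+(n/r)^{1-\frac1r}\sqrt[r]{N}\right)^r$). You are also more careful than the paper on one point: \autoref{lem:r-partite} is stated for connected hypergraphs, and your explicit reduction to components (using that $\rho(\widehat H)$ is the maximum of the component spectral radii and that component degrees are dominated by degrees in $\widehat H$) closes a gap the paper passes over silently.
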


\noindent{\bf Proof of Claim 2.}
Let $\Delta_i=\max\{d_j\,|\,j\in V_i(\widehat{H})\}$, $i\in[r]$, where
$V_1(\widehat{H})$, $V_2(\widehat{H})$, $\ldots$, $V_r(\widehat{H})$
are the vertex classes of $\widehat{H}$. Hence $\Delta_i\leq m/n_i+1$
by \autoref{coro:Regular} and the proof of \autoref{lem:Regular}.
Using \autoref{lem:r-partite} gives
\[
\rho(\widehat{H})\leq\sqrt[r]{\Delta_1\Delta_2\cdots\Delta_r}
\leq\sqrt[\leftroot{-2}\uproot{14}r]{\left(\frac{m}{n_1}+1\right)
\left(\frac{m}{n_2}+1\right)\cdots\left(\frac{m}{n_r}+1\right)}.
\]
It suffices to show that
\[
\prod_{i=1}^r\left(\frac{m}{n_i}+1\right)\leq
\left(\frac{m}{\sqrt[r]{n_1n_2\cdots n_r}}+\left(\frac{n}{r}\right)^{1-\frac{1}{r}}\right)^r.
\]
Denote by $e_0(n_1,n_2,\ldots,n_r)=1$ and
\[
e_j(n_1,n_2,\ldots,n_r)=
\sum_{1\leq i_1<i_2<\cdots<i_j\leq r}n_{i_1}n_{i_2}\cdots n_{i_j},~j=1,2,\ldots,r,
\]
the $j$-th elementary symmetric polynomials in $n_1$, $n_2$, $\ldots$, $n_r$.
By \autoref{claim1} and Maclaurin's inequality, for any $ i\in[r]$ we have
\[
\left(\frac{n}{r}\right)^{1-\frac{1}{r}}\sqrt[r]{n_1n_2\cdots n_r}\geq\frac{n}{r}=
\frac{e_1(n_1,n_2,\ldots,n_r)}{\binom{r}{1}}\geq
\left(\frac{e_i(n_1,n_2,\ldots,n_r)}{\binom{r}{i}}\right)^{\frac{1}{i}},
\]
which yields that
\[
e_i(n_1,n_2,\ldots,n_r)\leq\binom{r}{i}
\left(\frac{n}{r}\right)^{\frac{(r-1)i}{r}}(n_1n_2\cdots n_r)^{\frac{i}{r}}.
\]
Notice that
\[
\prod_{i=1}^r\left(\frac{m}{n_i}+1\right)=
\sum_{i=0}^r\frac{e_i(n_1,n_2,\ldots,n_r)}{n_1n_2\cdots n_r}\cdot m^{r-i}.
\]
Therefore, we obtain that
\begin{align*}
\prod_{i=1}^r\left(\frac{m}{n_i}+1\right) & \leq
\sum_{i=0}^r\binom{r}{i}\left(\frac{m}{\sqrt[r]{n_1n_2\cdots n_r}}\right)^{r-i}
\left(\frac{n}{r}\right)^{\frac{(r-1)i}{r}}\\
& =\left(\frac{m}{\sqrt[r]{n_1n_2\cdots n_r}}+\left(\frac{n}{r}\right)^{1-\frac{1}{r}}\right)^r.
\end{align*}
The proof of the claim is completed.

We will take the proof technique from \cite{Nikiforov:Degree deviation}. Let
$H_1=(V(H),E(H)\cup E(\widehat{H}))$ and $H_2=(V(H),E(H)\backslash E(\widehat{H}))$.
Clearly, $H$ is a subhypergraph of $H_1$, then $\rho(H)\leq\rho(H_1)$.
Therefore, by \autoref{lem:Weyl's inequality}, we have
\[
\rho(H)\leq\rho(H_1)=\rho(H_2\cup\widehat{H})\leq\rho(H_2)+\rho(\widehat{H}).
\]
It follows from \autoref{lem:rho<m} that
\begin{equation}
\label{eq:rho(H)-rho(H')}
\rho(H)-\rho(\widehat{H}) \leq\rho(H_2)\leq(E(H)\backslash E(\widehat{H}))^{\frac{r-1}{r}}
\leq\left(\frac{s_r(H)}{2}\right)^{\frac{r-1}{r}}.
\end{equation}
Finally, by \eqref{eq:rho(H)-rho(H')} and \autoref{claim2}, we obtain
\begin{equation}
\label{eq:Finally}
\rho(H)-\frac{m}{\sqrt[r]{n_1n_2\cdots n_r}}\leq
\left(\frac{s_r(H)}{2}\right)^{\frac{r-1}{r}}+\left(\frac{n}{r}\right)^{1-\frac{1}{r}}.
\end{equation}

Let $H(k,k,\ldots,k)$ be a blown-up of $H$. Clearly,
\[
|V(H(k,k,\ldots,k))|=kn,~|E(H(k,k,\ldots,k))|=k^rm.
\]
Applying \eqref{eq:Finally} for $H(k,k,\ldots,k)$, we have
\begin{align*}
\rho(H(k,k,\ldots,k))-\frac{k^rm}{\sqrt[r]{(kn_1)(kn_2)\cdots (kn_r)}} & \leq
\left(\frac{s_r(H(k,k,\ldots,k))}{2}\right)^{\frac{r-1}{r}}+\left(\frac{kn}{r}\right)^{1-\frac{1}{r}}.
\end{align*}
On the other hand, notice that $\rho(H(k,k,\ldots,k))=k^{r-1}\rho(H)$ by
\autoref{lem:Blown-up} and
\begin{align*}
s_r(H(k,k,\ldots,k))
& =k\sum_{i\in[r]}\sum_{j\in V_i}\left|d_{H(k,\ldots,k)}(j)-\frac{k^rm}{kn_i}\right|\\
& =k^r\sum_{i\in[r]}\sum_{j\in V_i}\left|d_{H}(j)-\frac{m}{n_i}\right|\\
& =k^rs_r(H),
\end{align*}
which follows that
\[
k^{r-1}\rho(H)-\frac{k^{r-1}m}{\sqrt[r]{n_1n_2\cdots n_r}}
\leq\left(\frac{k^rs_r(H)}{2}\right)^{\frac{r-1}{r}}+\left(\frac{kn}{r}\right)^{1-\frac{1}{r}}.
\]
Therefore, we obtain
\[
\rho(H)-\frac{m}{\sqrt[r]{n_1n_2\cdots n_r}}\leq\left(\frac{s_r(H)}{2}\right)^{\frac{r-1}{r}}+
\left(\frac{n}{r}\right)^{1-\frac{1}{r}}\cdot\frac{1}{k^{r+\frac{1}{r}-2}}.
\]
Take the limit $k\to +\infty$ on both sides of the above equation, we obtain the desired result.

The proof is completed. \qed \vspace{3mm}

In the following we will give a proof of \autoref{thm:Main result-2} in virtue of
\autoref{thm:Main result-1} and the following result.

\begin{lemma}[\cite{Liu:Bounds concerning degrees}]
\label{lem:rho bound concerning degrees}
Suppose that $H$ is an $r$-uniform hypergraph on $n$ vertices. Let $d_i$ be the degree
of vertex $i$ of $H$, and $\rho(H)$ be the spectral radius of $H$. Then
\[
\rho(H)\geq
\left(\frac1n\sum_{i=1}^nd^{\frac{r}{r-1}}_i\right)^{\frac{r-1}{r}}.
\]
If $H$ is connected and $r\geq 3 $, then the equality holds if and only if $H$ is regular.
\end{lemma}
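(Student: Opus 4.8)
The plan is to derive the inequality from the variational description of $\rho(H)$ combined with a ``spectral norm'' estimate for the adjacency tensor, and then to read the equality case off the same chain. Throughout write $\mathcal{A}=\mathcal{A}(H)$, let $\mathbf{1}=(1,\dots,1)^{\mathrm{T}}$ and $\mathbf{d}=(d_1,\dots,d_n)^{\mathrm{T}}$, and note from \eqref{eq:Ax equation} that $(\mathcal{A}\mathbf{1})_i=\sum_{i_2,\dots,i_r}a_{ii_2\cdots i_r}=d_i$, i.e.\ $\mathcal{A}\mathbf{1}=\mathbf{d}$; for a vector $z$ set $\|z\|_r=(\sum_i z_i^{\,r})^{1/r}$. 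The core ingredient is the bilinear estimate
\[
\langle\mathcal{A}x,\,y\rangle\ \le\ \rho(H)\,\|x\|_r^{\,r-1}\,\|y\|_r\qquad\text{for all }x,y\ge 0 .
\]
For $r=2$ this is just the operator-norm bound for the nonnegative symmetric adjacency matrix; for general $r$ it is the statement that a symmetric nonnegative tensor attains its injective $\ell_r$-norm on the diagonal (when evaluated at nonnegative vectors), which in turn rests on the variational identity $\rho(H)=\max\{z^{\mathrm{T}}(\mathcal{A}z):z\ge 0,\ \|z\|_r=1\}$ underlying the remark after \autoref{lem:r-partite}. Establishing this estimate is the main obstacle; the rest is a short computation.

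Granting the estimate, apply it with $x=\mathbf{1}$ and $y=\mathbf{d}^{[1/(r-1)]}=(d_1^{1/(r-1)},\dots,d_n^{1/(r-1)})^{\mathrm{T}}$ -- the latter chosen so that $y$ is proportional to $(\mathcal{A}\mathbf{1})^{[1/(r-1)]}$, which makes the H\"older step implicit in the estimate an equality. Using $\mathcal{A}\mathbf{1}=\mathbf{d}$,
\[
\langle\mathcal{A}\mathbf{1},\,\mathbf{d}^{[1/(r-1)]}\rangle=\langle\mathbf{d},\,\mathbf{d}^{[1/(r-1)]}\rangle=\sum_{i=1}^{n}d_i^{\,1+\frac{1}{r-1}}=\sum_{i=1}^{n}d_i^{\,\frac{r}{r-1}},
\]
while $\|\mathbf{1}\|_r^{\,r-1}=n^{(r-1)/r}$ and $\|\mathbf{d}^{[1/(r-1)]}\|_r=\big(\sum_i d_i^{r/(r-1)}\big)^{1/r}$. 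Substituting gives $\sum_i d_i^{r/(r-1)}\le\rho(H)\,n^{(r-1)/r}\big(\sum_i d_i^{r/(r-1)}\big)^{1/r}$. If $H$ has at least one edge this sum is positive, so dividing by $\big(\sum_i d_i^{r/(r-1)}\big)^{1/r}$ and then by $n^{(r-1)/r}$ yields $\big(\tfrac1n\sum_i d_i^{r/(r-1)}\big)^{(r-1)/r}\le\rho(H)$; if $H$ has no edges the claimed inequality is trivial.

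For the equality statement, assume $H$ is connected and $r\ge 3$. Then $\mathcal{A}$ is weakly irreducible, hence has a positive Perron eigenvector $z^{*}$ with $\|z^{*}\|_r=1$ and $\mathcal{A}z^{*}=\rho(H)\,(z^{*})^{[r-1]}$, and for $r\ge 3$ this is the only direction at which the injective-norm estimate is attained. In our chain the H\"older step is automatically an equality (by the choice of $y$), so equality throughout the Lemma is equivalent to equality in the bilinear estimate, which forces $\mathbf{1}$ to be a scalar multiple of $z^{*}$. Substituting $z^{*}=c\mathbf{1}$ into the eigenvalue equation gives $c^{r-1}\mathbf{d}=\mathcal{A}(c\mathbf{1})=\rho(H)\,c^{r-1}\mathbf{1}$, hence $d_i=\rho(H)$ for all $i$ and $H$ is regular; conversely, for a $d$-regular $H$ one has $\mathcal{A}\mathbf{1}=d\mathbf{1}$, $\rho(H)=d$, and both sides equal $d$. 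The hypothesis $r\ge 3$ is essential: for $r=2$ the injective norm of the adjacency matrix is attained also in ``bipartite semiregular'' directions (e.g.\ at $\mathbf{1}$ for a complete bipartite graph $K_{a,b}$), so equality can hold without regularity.

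An alternative route to the inequality alone, avoiding the injective-norm theorem, is to start from $\rho(H)\ge\frac1m\sum_{\{i_1,\dots,i_r\}\in E(H)}\sqrt[r]{d_{i_1}\cdots d_{i_r}}$ -- the first displayed inequality of the remark after \autoref{lem:r-partite}, obtained by testing $z^{\mathrm{T}}(\mathcal{A}z)$ at $z=\frac{1}{\sqrt[r]{rm}}\mathbf{d}^{[1/r]}$ -- and then to prove the purely combinatorial bound $\frac1m\sum_{e\in E(H)}\big(\prod_{i\in e}d_i\big)^{1/r}\ge\big(\frac1n\sum_i d_i^{r/(r-1)}\big)^{(r-1)/r}$. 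One can attack the latter via the incidence identity $\sum_i d_i^{r/(r-1)}=\sum_{e\in E(H)}\sum_{i\in e}d_i^{1/(r-1)}$ together with H\"older over the vertex--edge incidences; I expect this estimate (which is tight for stars) to be the delicate point of that approach, and it does not obviously recover the equality characterisation.
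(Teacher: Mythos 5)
This lemma is not proved in the paper at all: it is imported verbatim from the authors' preprint \cite{Liu:Bounds concerning degrees}, so there is no internal proof to compare your argument against, and your proposal has to stand on its own. It does not. The entire content of your argument sits in the ``core ingredient'' $\langle\mathcal{A}x,y\rangle\le\rho(H)\,\|x\|_r^{r-1}\|y\|_r$ for nonnegative $x,y$, which you explicitly do not prove (``the main obstacle''). Worse, the one instance of it that you actually use, $x=\mathbf{1}$, $y=\mathbf{d}^{[1/(r-1)]}$, unwinds to $\sum_i d_i^{r/(r-1)}\le\rho(H)\,n^{(r-1)/r}\bigl(\sum_i d_i^{r/(r-1)}\bigr)^{1/r}$, which is \emph{exactly} the inequality to be proved. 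So the ``short computation'' is a tautological rearrangement, and the reduction is to a strictly harder, nontrivial statement: that a symmetric nonnegative tensor attains its restricted multilinear ($\ell_r$-injective) norm on the diagonal. That fact is true but is itself a theorem requiring proof (it does not follow from Cauchy--Schwarz-type symmetrization, which fails for general nonnegative symmetric tensors, e.g.\ already $\langle Ax,y\rangle\le\sqrt{\langle Ax,x\rangle\langle Ay,y\rangle}$ fails for the $2\times2$ adjacency matrix of an edge); nothing in this paper's toolkit (Lemmas 2.1--2.5, 3.1) supplies it.

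The equality analysis inherits the same problem and adds another: you need not only the injective-norm identity but a classification of \emph{all} nonnegative maximizing tuples for $r\ge3$ (to conclude that tightness forces $\mathbf{1}\propto z^*$), which is again asserted rather than proved. The alternative route you sketch at the end is likewise left open at its decisive step, the combinatorial inequality $\frac1m\sum_{e\in E(H)}\bigl(\prod_{i\in e}d_i\bigr)^{1/r}\ge\bigl(\frac1n\sum_i d_i^{r/(r-1)}\bigr)^{(r-1)/r}$, and as you note it would not yield the equality characterization. In short: the computations you do carry out are correct, but every step that carries mathematical content is deferred to an unproven claim, so the proposal is a genuine gap rather than a proof. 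To repair it you would either have to prove the nonnegative injective-norm theorem (or cite a precise source for it, e.g.\ the multilinear results in \cite{Friedland2013} or \cite{Nikiforov}), or follow the cited preprint's own argument via the variational characterization $\rho(H)=\max\{x^{\mathrm{T}}(\mathcal{A}x):x\ge0,\ \|x\|_r=1\}$ with an explicit degree-weighted test vector.
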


\noindent{\bfseries Proof of \autoref{thm:Main result-2}.}
We first prove the left hand. For short, denote $\rho(H)=\rho$. By AM-GM inequality, we have
\begin{align}\label{eq:AM-MB}
\frac{1}{r\!-\!1}\rho^{\frac{r}{r-1}}\!+\!\left(\frac{rm}{n}\right)^{\frac{r}{r-1}} \!&\! =
\frac{1}{r\!-\!1}\Bigg[\rho^{\frac{r}{r-1}}+\underbrace{\left(\frac{rm}{n}\right)^{\frac{r}{r-1}}+
\cdots+\left(\frac{rm}{n}\right)^{\frac{r}{r-1}}}_{r-1}\Bigg]\nonumber\\
& \geq\frac{r^2m}{(r-1)n}\rho^{\frac{1}{r-1}}.
\end{align}
Therefore, by \autoref{lem:rho bound concerning degrees} and \eqref{eq:AM-MB}, we have
\begin{align*}
v(H) & =\frac{1}{n}\sum_{i=1}^nd_i^{\frac{r}{r-1}}-\left(\frac{rm}{n}\right)^{\frac{r}{r-1}}\\
& \leq\rho^{\frac{r}{r-1}}-\left(\frac{rm}{n}\right)^{\frac{r}{r-1}}\\
& =\frac{r}{r-1}\rho^{\frac{r}{r-1}}-
\left[\frac{1}{r\!-\!1}\rho^{\frac{r}{r-1}}\!+\!\left(\frac{rm}{n}\right)^{\frac{r}{r-1}}\right]\\
& \leq\frac{r}{r-1}\rho^{\frac{1}{r-1}}\left(\rho-\frac{rm}{n}\right).
\end{align*}
Notice that $\rho\leq\frac{r}{\sqrt[r]{r!}}m^{\frac{r-1}{r}}$ by \autoref{lem:rho<m}. Hence
\[
v(H)\leq\frac{r}{r-1}\left(\frac{r}{\sqrt[r]{r!}}\right)^{\frac{1}{r-1}}m^{\frac{1}{r}}
\left(\rho-\frac{rm}{n}\right).
\]

Now we prove the right hand. Denote by $K_r^r$ the $r$-uniform hypergraph of order $r$ consisting
of a single edge. Let $\widetilde{H}$ be the direct product of $H$ and $K_r^r$,
i.e., $\widetilde{H}=H\times K_r^r$. Clearly, $\widetilde{H}$ is an $r$-partite $r$-uniform
hypergraph with partition
\[
V(\widetilde{H})=\bigcup_{j=1}^r\left(V(H)\times\{j\}\right).
\]
By \autoref{thm:Main result-1} we have
\begin{equation}
\label{eq:rho(tilde{H})-rm/n}
\rho(\widetilde{H})-\frac{|E(\widetilde{H})|}{n}\leq
\left(\frac{s_r(\widetilde{H})}{2}\right)^{\frac{r-1}{r}}.
\end{equation}
Notice that $|E(\widetilde{H})|=r!m$ and $d_{\widetilde{H}}(i,j)=(r-1)!d_i$ for any
$i\in V(H)$, $j\in[r]$. Therefore
\begin{align*}
s_r(\widetilde{H}) & =\sum_{j=1}^r\sum_{(i,j)\in V(\widetilde{H})}
\left|d_{\widetilde{H}}((i,j))-\frac{r!m}{rn}\right|\\
& =r\sum_{i\in V(H)}\left|(r-1)!d_{H}(i)-\frac{r!m}{rn}\right|\\
& =r!s(H).
\end{align*}
By \cite[Claim 4]{Liu:Bounds concerning degrees}, we know $\rho(\widetilde{H})=(r-1)!\rho(H)$.
It follows from \eqref{eq:rho(tilde{H})-rm/n} that
\[
(r-1)!\rho(H)-\frac{r!m}{n}\leq\left(\frac{r!s(H)}{2}\right)^{\frac{r-1}{r}},
\]
and the assertion follows by simple algebra. \qed

\section*{References}

\end{document}